 \numberwithin{equation}{section}
\def\R{\mathbb{ R}}  
\newtheorem{thm}{Theorem}[section]
\newtheorem{lemma}[thm]{Lemma}
\newtheorem{rem}[thm]{Remark}
\newtheorem{prop}[thm]{Proposition}
\newcommand{\be}{\begin{equation}}
\newcommand{\ee}{\end{equation}}
\newcommand{\ba}{\begin{array}}
\newcommand{\ea}{\end{array}}
\newcommand{\al}{\alpha}
\newcommand{\bt}{\beta}
\newcommand{\la}{\lambda}
\newcommand{\bg}{\begin{gathered}}
\newcommand{\eg}{\end{gathered}}
\newcommand{\bea}{\begin{eqnarray}}
\newcommand{\eea}{\end{eqnarray}}
\title{On the Atkin Polynomials}
\author{Ahmad El-Guindy \and   Mourad E.H. Ismail\thanks{Research supported by the DSFP at King Saud University through grant DSFP/MATH 01 and by the  NPST program of King Saud University; project number 10-MAT1293-02.}  }
\begin{document}
 \maketitle
 \date{}
\begin{abstract}
 We identify the Atkin polynomials in terms of associated Jacobi polynomials. Our identification 
 then takes advantage of the theory of orthogonal polynomials and their asymptotics to establish many new properties of the Atkin polynomials. This shows that co-recursive  polynomials may lead to interesting sets of orthogonal polynomials. 
  \end{abstract}


\noindent MSC (2010):  Primary  33C47, 11F03  ; Secondary 33C05 

\noindent Keywords:  Associated Jacobi polynomials, asymptotics, weight functions, scaled invariant, modular function, birth and death process polynomials, supersingular polynomials, Hecke operators.



\section{Introduction}
In unpublished work Oliver Atkin introduced a family of orthogonal polynomials with fascinating 
 number theoretic properties: They are the unique family of monic orthogonal polynomials 
  corresponding to a unique scalar product on the space of polynomials in the modular 
    $j$-invariant for which all Hecke operators 
are self-adjoint. Furthermore, their reduction modulo a prime $p\geq 5$ is also very significant 
 in the theory of elliptic curves, as it matches the supersingular polynomial at $p$ whenever the 
  degrees agree.
For all the number theoretic definitions, as well as beautiful proofs of these and other 
 facts about the Atkin polynomials, we refer the reader to the excellent paper 
  \cite{Kan:Zag} by Kaneko and Zagier, where Atkin's results were popularized, 
   simplified, and expanded upon.

The Atkin polynomials are generated by  the recurrence relation 
 \bea\label{rec1}
 \begin{gathered}
 A_{n+1}(x) = \left[x -  24 \frac{144 n^2- 29}{(2n+1)(2n-1)}\right]A_n(x)    \\
 - 36 \frac{(12n -13)(12n -7)(12n-5)(12n+1)} {n(n-1)(2n-1)^2} A_{n-1}(x), n >1. 
 \end{gathered}
 \label{eqAtkinrr}
 \eea
through the  initial conditions 
 \bea
 \label{eqAtkinIC}
 A_0(x) = 1, \quad  A_1(x) = x - 720, \quad A_2(x) = x^2 - 1640 x + 269280. 
 \eea
 The polynomials $\{A_n(x)\}$ are orthogonal with respect to an absolutely continuous 
  measure supported on $[0, 1728]$ (cf. Section 7). 

  
  In this paper we show that the Atkin polynomials are related to the associated Jacobi polynomials 
  of Wimp \cite{Wim} and of  Ismail and Masson \cite{Ism:Mas}. 
   This identification leads to many new properties of 
  the polynomials $\{A_n(x)\}$.

 It is worth pointing out that the way the Atkin polynomials are
defined, that is define $P_0(x),
   P_1(x)$ and $P_2(x)$, then use a recurrence relation to generate
the rest, is not unusual
    in the literature on orthogonal polynomials.  The idea is to start
with two monic polynomials,  $P_k(x)$ and $P_{k+1}(x)$
  of degrees $k$ and $k+1$, respectively,
with real, simple and
interlacing zeros. Then use the
division algorithm  to generate the
 monic  polynomials $P_{n}(x), 0 \le n < k$ and we are guaranteed to
have a sequence of monic
 orthogonal polynomials $\{P_j(x): 0 \le j \le k+1\}$. Now use any
 three term recurrence
relation of the form 
\bea
\label{eqmonic}
P_{n+1}(x) = (x-\al_n)P_n(x) + \beta_n P_{n-1}(x), 
\eea
 where $\al_n \in \R$ and $\bt_n >0$,  for $n >k$ to generate the polynomials $\{P_n(x)\}$ for $n > k+1$. 
  The construction above is referred to as
``Wendroff's Theorem" in the orthogonal
polynomial literature. The interested reader may consult \cite{Ism} or
\cite{Chi}
  for the precise statement and the detailed
 proof of Wendroff's theorem. This is also related to the concept of co-recursive polynomials, \cite{Chi}. 

In Section 2 we recall some preliminary facts about associated Jacobi polynomials and orthogonal polynomials in general. In Section 3, we obtain a representation of (a scaled version of ) the Atkin polynomials as a linear combination of the associated Jacobi polynomials of Wimp \cite{Wim} and of Ismail and Masson \cite{Ism:Mas}. Building on that, we provide an explicit representation of the coefficients of the Atkin polynomials in Section 4, a representation in terms of certain 
hypergeometric functions and an asymptotic expansion in Section 5, and a generating function identity in Section 6. Lastly in Section 7 we give an explicit description of the weight function in terms of certain $\, _2F_1$ functions.  
  
 We shall follow the standard notation for hypergemetric functions and orthogonal polynomials as 
 in \cite{And:Ask:Roy}, \cite{Ism}, \cite{Luk}, \cite{Rai}, \cite{Sze}.   In particular we use $F(a,b;c;z)$ to mean ${}_2F_1(a,b;c;z)$. 
  
  \section{Preliminaries}

Let $\{\la_n\}$ and $\{\mu_n\}$ be the birth and death rates of a birth and death process, 
that is $\la_n >0,$ 
and $\mu_{n+1} > 0$ for all $n \ge 0$ with $\mu_0 \ge 0$. Such process generates  a sequence of orthogonal polynomials through a three term recurrence relation 
\bea
\label{eqBDPP}
-x Q_n(x) = \la_n Q_{n+1}(x) - (\la_n +\mu_n) Q_n(x) + \mu_nQ_{n-1}(x), \quad n >0, 
\eea
with the initial conditions 
\bea
\label{eqIC} 
Q_0(x) =1, \qquad Q_1(x) = (\la_0+\mu_0-x)/\la_0.
\eea
  The corresponding monic polynomials satisfy
\bea
\label{eqmonicBD}
x\tilde{Q}_n(x) = \tilde{Q}_{n+1}  + (\la_n+\mu_n)\tilde{Q}_n(x) - \la_{n-1} \mu_n \tilde{Q}_{n-1}(x), 
\eea
with $\tilde{Q}_0(x) =1, \tilde{Q}_1(x) = x-\la_0-\mu_0.$ When $\mu_0 \ne 0$ there is a 
second natural birth and death process with birth rates $\{\la_n\}$ and death rates $\{\tilde{\mu}_n\}$ with 
 $\tilde{\mu}_n = \mu_n, n>0$ but   $\tilde{\mu}_0 =0$ \cite{Ism:Let:Val}. 
  The latter birth and death generates a  second family of orthogonal polynomials 
   satisfying \eqref{eqBDPP} but with initial conditions 
  $Q_0(x) =1, Q_1(x) = (\la_0 -x)/\la_0.$ This observation is due to Ismail, 
  Letessier and Valent  in \cite{Ism:Let:Val}. 
   
The associated polynomials of $\{Q_n(x)\}$ correspond to the birth and death rates 
$\{\la_{n+c}\}$ and death rates $\{\mu_{n+c}\}$, when such rates are well defined.  Since we 
consider $c \ge 0$, usually $\mu_c>0$.  Thus  we usually have two families 
 of associated  polynomials. 
 One is defined  when $\mu_c$ is defined from the pattern of $\mu_{n}$. When $\mu_c \ne 0$, a second 
 family arises if $\mu_{n+c}$, when $n=0$ is interpretted   as zero.  
 
 Recall that the Jacobi polynomials $\{P_n^{(\al, \beta)}(x)\}$ can be defined by the  three term recurrence relation  
 \bea
\label{eqttrrJ}
\begin{gathered}
2(n+1)(n+\alpha+\beta+1) (\alpha+\beta+2n) P_{n+1}^{(\alpha, \beta)}(x)  \qquad \\
 \qquad =(\alpha+\beta+2n+1)\left[(\alpha^2-\beta^2)+
 x(\alpha+\beta+2n+2)(\alpha+\beta+2n)\right]  \\
 \qquad  \times P_{n}^{(\alpha, \beta)}(x)-2(\alpha+n)(\beta+n)
 (\alpha+\beta+2n+2) P_{n-1}^{(\alpha,\beta)}(x),
\end{gathered}
\eea
for $n\ge 0$, with $P_{-1}^{(\alpha,\beta)}(x)=0$, $P_0^{(\alpha,\beta)}(x)=1$.  We now set 
\bea
\label{eqdefV}
\qquad  V^{(\al,\beta)}_n(x) 
=  \frac{n!(\al+\bt+1)_n}{(\al+ \beta+1)_{2n}} P_n^{(\al,\beta)}(2x-1) 
=  \frac{n!}{(n+\al+ \beta+1)_n} P_n^{(\al,\beta)}(2x-1). 
\eea
One can easily verify that the polynomials $\{V^{(\al,\beta)}_n(x)\}$ are 
  monic  birth and death process polynomial $\tilde{Q}_n$'s,  
 with rates 
 \bea
 \bg 
 \la_n  = \frac{(n+\beta+1)(n+\al+\beta+1)}{(2n+\al+\beta+1)(2n+\al+\beta+2)},\\
 \mu_n = \frac{n(n+\al)}{(2n+\al+\beta)(2n+\al+\beta+1)}. 
 \eg
 \label{eqBDrforJ}
 \eea
In \cite{Wim}, Wimp considered the recurrence relation obtained by formally replacing $n$ by $n+c$ in \eqref{eqttrrJ}, and he showed that the new relation has two linearly independent solutions $P_n^{(\al,\beta)}(x;c)$ and $P_{n-1}^{(\al,\beta)}(x;c+1)$. Ismail and Masson \cite{Ism:Mas} 
 identified the birth and death rates corresponding to that three term recurrence relation and provided two linearly independent solutions 
 $P_n^{(\al,\beta)}(x;c)$ and  $\mathcal{P}_n^{(\al,\beta)}(x;c)$. They then used the notation 
 \bea
 R_n^{(\al,\beta)}(x;c) = P_n^{(\al,\beta)}(2x-1;c),  \quad 
 \mathcal{R}_n^{(\al,\beta)}(x;c)= \mathcal{P}_n^{(\al,\beta)}(2x-1;c). 
 \eea
 We shall use the notation 
 \bea
\label{eqdefVc}
\bg
V^{(\al,\beta)}_n(x;c) =\frac{(c+1)_n(\al+\bt+c+1)_n}{(\al+\beta+2c+1)_{2n}} 
R_n^{(\al.\beta)}(x;c),  \\
\mathcal{V}^{(\al,\beta)}_n(x;c) =\frac{(c+1)_n(\al+\bt+c+1)_n}{(\al+\beta+2c+1)_{2n}}
 \mathcal{R}_n^{(\al.\beta)}(x;c). 
\eg
\eea
To lighten our notation, we shall occasionally omit the parameters when the context is clear.
We consider the  birth and death rates  
\bea
 \bg 
 \la_n  = \frac{(n+c+\beta+1)(n+c+\al+\beta+1)}{(2n+2c+\al+\beta+1)(2n+2c+\al+\beta+2)}, \quad n \ge 0,\\
 \mu_n = \frac{(n+c)(n+c+\al)}{(2n+2c+\al+\beta)(2n+2c+\al+\beta+1)}, \quad n>0,
 \eg
 \label{eqBDrforAJ}
 \eea
with 
\bea
\label{eqdefmu0}
\mu_0 :=  \begin{cases} & \frac{c(c+\al)}{(2c+\al+\beta)(2c+\al+\beta+1)} \quad \textup{for} \; V, \\
& 0  \quad \textup{for}\; \mathcal{V}.
\end{cases} 
\eea

\section{The Atkin Polynomials} 
 In order to compare the Atkin  polynomials with other 
   results in the literature we need to renormalize them. Let 
 \bea
 \label{eqAnBn}
 A_n(1728 y) = (1728)^n \mathcal{A}_n(y). 
 \eea
 The polynomials $\mathcal{A}_n$ are now generated by 
 \bea
 \begin{gathered}
 \mathcal{A}_{n+1}(x) = \left[x - \frac{2(n^2-29/144)}{4n^2-1}\right] \mathcal{A}_n(x)  \qquad \qquad \qquad \qquad  \\
\qquad \qquad  - \frac{(n-13/12)(n- 7/12)(n-5/12)(n+1/12)}
 {2n(2n-1)^2(2n-2)} \mathcal{A}_{n-1}(x), \quad n >1. 
  \end{gathered}
  \eea
 The initial conditions are 
 \bea\label{Binitial}
 \mathcal{A}_0(x) =1,  \quad  \mathcal{A}_1(x) = x-5/12,  \quad \mathcal{A}_2(x) = x^2 - \frac{205}{216} x + \frac{935}{10368}. 
 \eea
Kaneko  and  Zagier \cite{Kan:Zag}  wrote the recurrence relation \eqref{eqAtkinrr} in 
 the monic form \eqref{eqmonicBD}. 
   Indeed their (19) when written in terms of the $\mathcal{A}_n$'s corresponds to \eqref{eqmonicBD} with 
 \bea
 \la_n = \frac{(n-1/12)(n+5/12)}{2n(2n+1)}, \quad \mu_n = \frac{(n-5/12)(n+1/12)}{2n(2n-1)}.
 \eea   
From \eqref{eqdefVc} we see that $V_n^{(\alpha, \beta)}(x;c)$ and $\mathcal{V}_n^{(\alpha, \beta)}(x;c)$ satisfy the second order difference equation  
\bea
\bg
\label{Vrec}
T_{n+1}(x)=\left(x+\frac{\alpha^2-\beta^2 -(2n+2c+\alpha+\beta)(2n+2c+\alpha+\beta+2)}{2(2n+2c+\alpha+\beta)(2n+2c+\alpha+\beta+2)}\right) T_n(x)\\
-\frac{(n+c)(n+c+\alpha)(n+c+\beta)(n+c+\alpha+\beta)}{(2n+2c+\alpha+\beta-1)(2n+2c+\alpha+\beta)^2(2n+2c+\alpha+\beta+1)}T_{n-1}(x), 
\eg
\eea
for $n\geq 1$ with the  initial conditions  $V_0=\mathcal{V}_0=1$ and 
\bea\label{Vinitial}
\bg
V^{(\al,\beta)}_1(x;c) = x-(\lambda_0+\mu_0)  \\
\mathcal{V}^{(\al,\beta)}_1(x;c) =x-\lambda_0, 
\eg
\eea
where $ \la_n$ and $\mu_n$ are defined as in \eqref{eqBDrforAJ}--\eqref{eqdefmu0}. 
On the other hand, we see that the sequence  $\{\mathcal{A}_{n+1}(x)\}_{n=-1}^\infty$ is a solution of the 
second order difference equation 
\bea
\bg\label{Brec}
T_{n+1}(x)=\left(x-\frac{7+36(2n+1)(2n+3)}{72(2n+1)(2n+3)}\right) T_n(x)\\
-\frac{(n-1/12)(n+5/12)(n+7/12)(n+13/12)}{(2n)(2n+1)^2(2n+2)}T_{n-1}(x), \, (n\geq 1).
\eg
\eea
It is not hard to check that \eqref{Brec} is identical  to \eqref{Vrec} in exactly four cases, namely 
\[
(\alpha, \beta, c)\in S:=\left\{\left(\frac{-1}{2}, \frac{-2}{3}, \frac{13}{12}\right), \left(\frac{1}{2}, \frac{-2}{3}, \frac{7}{12}\right),\left(\frac{-1}{2}, \frac{2}{3}, \frac{5}{12}\right),\left(\frac{1}{2}, \frac{2}{3}, \frac{-1}{12}\right)\right\}.
\] 

\begin{thm}\label{thmBrep}
For $n\geq 0$ and $(\alpha, \beta, c)\in S$,  we have the following representations for $\mathcal{A}_{n+1}(x)$. 
\bea
\mathcal{A}_{n+1}(x)=(x-5/12)V_n^{(\alpha,\beta)}(x;c)-\frac{91}{384}V_{n-1}^{(\alpha, \beta)}(x;c+1),\\
\mathcal{A}_{n+1}(x)=(x-8)V_n^{(-1/2,2/3)}(x;5/12)+\frac{91}{12}\mathcal{V}_{n}^{(-1/2, 2/3)}(x;5/12),\\
\mathcal{A}_{n+1}(x)=xV_n^{(1/2,-2/3)}(x;7/12)-\frac{5}{12}\mathcal{V}_{n}^{(1/2,- 2/3)}(x;7/12).\label{Brep}
\eea
\end{thm}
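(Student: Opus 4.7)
The proof strategy I have in mind is to reduce each of the three identities to a short check of initial conditions at $n=0$ and $n=1$. The crucial structural input is already given in the excerpt: for each of the four triples $(\alpha,\beta,c)\in S$ the recurrence \eqref{Brec} satisfied by $\{\mathcal{A}_{n+1}\}_{n\ge -1}$ and the recurrence \eqref{Vrec} satisfied by both $V_n^{(\alpha,\beta)}(x;c)$ and $\mathcal{V}_n^{(\alpha,\beta)}(x;c)$ are literally identical. A short auxiliary calculation shows that the sequence $n\mapsto V_{n-1}^{(\alpha,\beta)}(x;c+1)$ is also a solution of \eqref{Vrec}, because the coefficients of \eqref{Vrec} depend on $(n,c)$ only through the combinations $n+c$ and $2n+2c$, both of which are invariant under the simultaneous shift $(n,c)\mapsto(n-1,c+1)$. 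With the convention $V_{-1}^{(\alpha,\beta)}(x;c+1)=0$ this furnishes a second linearly independent solution for use in the first identity, while $\mathcal{V}_n^{(\alpha,\beta)}(x;c)$ plays that role in the second and third.

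Consequently each right-hand side of the theorem is an $\mathbb{R}[x]$-linear combination of two solutions of the same second-order linear recurrence, hence itself a solution for $n\ge 1$. Since any solution of an order-two recurrence is determined by its values at two consecutive indices, the identity for all $n\ge 0$ follows by induction once it is verified at $n=0$ and $n=1$. The $n=0$ check is almost immediate: $V_0=\mathcal{V}_0=1$ and $V_{-1}^{(\alpha,\beta)}(x;c+1)=0$ make each right-hand side collapse to $x-5/12=\mathcal{A}_1$. The substantive verification is $n=1$: I would compute $\lambda_0$ and $\mu_0$ from \eqref{eqBDrforAJ}--\eqref{eqdefmu0} for each triple in $S$, use \eqref{Vinitial} to write $V_1^{(\alpha,\beta)}(x;c)=x-(\lambda_0+\mu_0)$ and $\mathcal{V}_1^{(\alpha,\beta)}(x;c)=x-\lambda_0$, and expand each right-hand side to match $\mathcal{A}_2=x^2-(205/216)x+935/10368$ from \eqref{Binitial}. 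A convenient simplification for the first identity is that $\lambda_0+\mu_0$ turns out to equal $115/216$ for all four triples in $S$, collapsing those four claims into one polynomial identity.

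The main (and essentially only) obstacle is the fractional arithmetic at $n=1$: the birth-and-death rates carry denominators like $288$ and $864$, and $\mathcal{A}_2$ has denominator $10368=2^7\cdot 3^4$, so one must track the cancellations carefully. Conceptually, however, the hard work has already been done in matching \eqref{Brec} with \eqref{Vrec}; once the two base cases are in hand, the coincidence of the recurrences transports the identity to every $n\ge 0$ for free.
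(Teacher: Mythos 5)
Your strategy coincides with the paper's own proof: the authors likewise observe that $\{V_n^{(\alpha,\beta)}(x;c),\mathcal{V}_n^{(\alpha,\beta)}(x;c)\}$ and $\{V_n^{(\alpha,\beta)}(x;c),V_{n-1}^{(\alpha,\beta)}(x;c+1)\}$ are bases of solutions of \eqref{Brec} for $(\alpha,\beta,c)\in S$, and then determine the coefficients by ``simple linear algebra on the equations corresponding to $n=0$ and $n=1$''; your verification of the stated coefficients at $n=0,1$ is the same argument read in the opposite direction. One concrete warning, since your plan hinges on the $n=1$ check actually closing: for the first identity it does not close with the printed constant. Using $V_1^{(\alpha,\beta)}(x;c)=x-\tfrac{115}{216}$ and $V_0^{(\alpha,\beta)}(x;c+1)=1$ from \eqref{Vconcrete1}--\eqref{Vconcrete2}, one finds $\bigl(x-\tfrac{5}{12}\bigr)\bigl(x-\tfrac{115}{216}\bigr)-\mathcal{A}_2(x)=\tfrac{2300-935}{10368}=\tfrac{455}{3456}$, whereas $\tfrac{91}{384}=\tfrac{819}{3456}$; the corrected value $\tfrac{455}{3456}=\tfrac{5}{12}\cdot\tfrac{91}{288}$ is also what you get by eliminating $\mathcal{V}_n$ from the third identity via $\mathcal{V}_n=V_n+\mu_0 V_{n-1}(x;c+1)$ with $\mu_0=\tfrac{91}{288}$, and it is the value that keeps the identity true at $n=2$ against \eqref{Vconcrete1}, \eqref{Vconcrete2} and \eqref{B123}. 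So your method is sound and is exactly the paper's, and the second and third identities do check out at $n=0,1$ as you describe, but carrying the plan through exposes a misprint in the coefficient of $V_{n-1}^{(\alpha,\beta)}(x;c+1)$ in the first displayed formula.
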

\begin{proof}
It is straightforward to check that for any $(\alpha, \beta, c)\in S$, $\{V_n^{(\alpha, \beta)}(x;c),\mathcal{V}_{n}^{(\alpha, \beta)}(x;c)\}$ is a basis of solutions of \eqref{Brec}, and the same it true for $\{V_n^{(\alpha, \beta)}(x;c),V_{n-1}^{(\alpha, \beta)}(x;c+1)\}$. The results follow by simple linear algebra on the equations corresponding to $n=0$ and $n=1$.
\end{proof}

We note that $V_n^{(\alpha, \beta)}(x;c)$ is the same for the four triples in $S$. Whereas we have two possibilities for $\mathcal{V}_n^{(\alpha, \beta)}(x;c)$ depending on whether $\beta=2/3$ or $\beta=-2/3$. For convenience  we explicitly  write down the first few of these polynomials.

\bea\label{Vconcrete1}
\bg
V_0^{(\alpha,\beta)}(x;c)=1, \quad 
V_1^{(\alpha,\beta)}(x;c)=x-\frac{115}{216}, \\
V_2^{(\alpha,\beta)}(x;c)=x^2-\frac{187}{180}x+\frac{11621}{55296}, \\
\eg
\eea

\bea\label{Vconcrete2}
\bg
V_{-1}^{(\alpha,\beta)}(x;c+1)=0, \quad 
V_{0}^{(\alpha,\beta)}(x;c+1)=1, \\
V_{1}^{(\alpha,\beta)}(x;c+1)=x-\frac{547}{1080}, \\
\eg
\eea

\bea\label{Vconcrete3}
\bg
\mathcal{V}_0^{(1/2,-2/3)}(x;7/12)=\mathcal{V}_0^{(-1/2,-2/3)}(x;13/12)=1, \\
\mathcal{V}_1^{(1/2,-2/3)}(x;7/12)=\mathcal{V}_1^{(-1/2,-2/3)}(x;13/12)=x-\frac{187}{864}, \\
\mathcal{V}_2^{(1/2,-2/3)}(x;7/12)=\mathcal{V}_2^{(-1/2,-2/3)}(x;13/12)=x^2-\frac{347}{480}x+\frac{124729}{2488320},   \\
\eg
\eea

\bea\label{Vconcrete4}
\bg
\mathcal{V}_0^{(1/2,2/3)}(x;-1/12)=\mathcal{V}_0^{(-1/2,2/3)}(x;5/12)=1, \\
\mathcal{V}_1^{(1/2,2/3)}(x;-1/12)=\mathcal{V}_1^{(-1/2,2/3)}(x;5/12)=x-\frac{475}{864}, \\
\mathcal{V}_2^{(1/2,2/3)}(x;-1/12)=\mathcal{V}_2^{(-1/2,2/3)}(x;5/12)=x^2-\frac{169}{160}x+\frac{108965}{497664}. \\
\eg
\eea

One can check the first few cases of Theorem \ref{thmBrep} using the fact that
\bea
\bg\label{B123}
\mathcal{A}_1(x)=x-\frac{5}{12},\qquad 
\mathcal{A}_2(x)=x^2-\frac{205}{216}x+\frac{935}{10368},\\
\mathcal{A}_3(x)=x^3-\frac{131}{90}x^2+\frac{28277}{55296}x-\frac{124729}{5971968}.
\eg
\eea

\section{Explicit Representations} 
Wimp gave an explicit formula for  $R_n^{(\al,\beta)}(x;c)$ on page 
987 of \cite{Wim}. When translated in terms of the $V_n$ polynomials it becomes  
\bea
\bg
V_n^{(\al,\beta)}(x;c) = (-1)^n \frac{(c+1)_n(\beta+c+1)_n}{(\al+\beta+2c+n+1)_n \, n!} \\
\times \sum_{k=0}^n\frac{(-n)_k(n+2c+\al+\beta+1)_k}{(c+1)_k(c+\beta+1)_k} x^k \\
\times {}_4F_3 \left(\left. \begin{array}{c}
  k -n, n+k+\al+\beta+2c+1, c+\beta, c  \\
 k+\beta+ c+1, k+c+1, \al+\beta+2c
     \end{array} \right| 1  \right).
\eg
\eea
On the other hand Ismail and Masson \cite[Theorem 3.3]{Ism:Mas} gave a similar formula for  
$\mathcal{R}_n^{(\al,\beta)}(x;c)$ which leads to
\bea
\bg
\mathcal{V}_n^{(\al,\beta)}(x;c) = (-1)^n \frac{(c+1)_n(\beta+c+1)_n}{(\al+\beta+2c+n+1)_n \, n!} \\
\times \sum_{k=0}^n\frac{(-n)_k(n+2c+\al+\beta+1)_k}{(c+1)_k(c+\beta+1)_k} x^k \\
\times {}_4F_3 \left(\left. \begin{array}{c}
  k -n, n+k+\al+\beta+2c+1, c+\beta+1, c  \\
 k+\beta+ c+1, k+c+1, \al+\beta+2c+1
     \end{array} \right| 1  \right).
\eg
\eea
The following theorem establishes an analogous representation of $\mathcal{A}_{n+1}(x)$.
\begin{thm}
For $n\geq 0$ we have
\bea\label{ourrep}
\bg
\mathcal{A}_{n+1}(x)=\frac{(19/12)_n (11/12)_n}{(n+2)_n(-n)_n}\\
\times\left[{}_3F_2 \left(\left. \begin{array}{c}
   -n, n+2, \frac{7}{12}  \\
  \frac{19}{12}, 2
 \end{array} \right| 1  \right)
+\sum_{k=0}^n \frac{(-n)_k(n+2)_k}{(19/12)_k(11/12)_k}x^{k+1}\right.\\
\times\left.\left\{
\frac{6}{5}{}_4F_3 \left(\left. \begin{array}{c}
  k -n, n+k+2, \frac{11}{12}, \frac{-5}{12}  \\
 k+\frac{11}{12}, k+\frac{19}{12}, 1
     \end{array} \right| 1  \right)-\frac{1}{5}{}_4F_3 \left(\left. \begin{array}{c}
  k -n, n+k+2, \frac{-1}{12}, \frac{-5}{12}  \\
 k+\frac{11}{12}, k+\frac{19}{12}, 1
     \end{array} \right| 1  \right)\right\}\right]\\
\eg
\eea
\end{thm}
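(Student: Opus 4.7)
The plan is to invoke Theorem \ref{thmBrep} and substitute the explicit Wimp and Ismail--Masson series for $V_n$ and $\mathcal{V}_n$ displayed immediately before the statement. The natural choice is the third representation in Theorem \ref{thmBrep},
\[
\mathcal{A}_{n+1}(x) = x\, V_n^{(1/2,-2/3)}(x; 7/12) - \frac{5}{12}\, \mathcal{V}_n^{(1/2,-2/3)}(x; 7/12),
\]
because for $(\alpha,\beta,c) = (1/2,-2/3,7/12)$ one has $\alpha+\beta+2c = 1$, $(c+1)_n = (19/12)_n$, $(c+\beta+1)_n = (11/12)_n$, and $n+2c+\alpha+\beta+1 = n+2$, which match precisely the Pochhammer ingredients of \eqref{ourrep}.

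After plugging this triple into the explicit formulas, both $V_n$ and $\mathcal{V}_n$ share the common prefactor $(-1)^n(19/12)_n(11/12)_n/[(n+2)_n\, n!]$; using $(-n)_n = (-1)^n n!$ this is the prefactor outside the bracket in \eqref{ourrep}. Pulling it out, the term $x V_n$ already contributes the $x^{k+1}$--sum with a Wimp--type ${}_4F_3$ whose upper parameters are $(k-n, n+k+2, -1/12, 7/12)$, while $-\tfrac{5}{12}\mathcal{V}_n$ contributes an $x^k$--sum with an Ismail--Masson--type ${}_4F_3$ whose upper parameters are $(k-n, n+k+2, 11/12, 7/12)$.

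The ${}_3F_2$ in \eqref{ourrep} will emerge from the $k = 0$ slice of the $\mathcal{V}_n$ series: at $k=0$ the parameter $c+\beta+1 = 11/12$ appears in both numerator and denominator of the Ismail--Masson ${}_4F_3$ and cancels, collapsing it to ${}_3F_2\!\bigl({-n,\,n+2,\,7/12\atop 19/12,\,2};1\bigr)$. The remaining $k\ge 1$ part of the $\mathcal{V}$ sum gets reindexed $k \mapsto k+1$ to align it with the $x^{k+1}$--sum from $x V_n$, after which each $x^{k+1}$--coefficient becomes a specific linear combination of two ${}_4F_3$s, one of Wimp type and one (shifted) of Ismail--Masson type, both with upper parameter $7/12$ still present.

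The main obstacle is to identify this linear combination with the $\tfrac{6}{5}\,{}_4F_3-\tfrac{1}{5}\,{}_4F_3$ appearing in \eqref{ourrep}, where both ${}_4F_3$s carry the \emph{shifted} upper parameter $c-1 = -5/12$. This requires a three-term contiguous relation for balanced ${}_4F_3$ series at unit argument that lowers a single upper parameter from $c$ to $c-1$; the rational weights $6/5$ and $-1/5$ should drop out as the standard ratios in that identity. Verifying this contiguous identity (either directly, or by lifting known ${}_3F_2$ contiguous relations as in \cite{Ism}) and carrying out the index bookkeeping is the technical heart of the proof.
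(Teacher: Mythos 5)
Your structural plan coincides with the paper's proof: the same third identity of Theorem \ref{thmBrep}, the same substitution of the Wimp and Ismail--Masson explicit formulas at $(\alpha,\beta,c)=(\tfrac12,-\tfrac23,\tfrac7{12})$, the same extraction of the common prefactor $(-1)^n(19/12)_n(11/12)_n/[(n+2)_n\,n!]=(19/12)_n(11/12)_n/[(n+2)_n(-n)_n]$, and the same reindexing $k\mapsto k+1$ of the $\mathcal{V}_n$ series to align it with the $x^{k+1}$ sum coming from $xV_n$. Your observation that the ${}_3F_2$ is the $k=0$ slice of the $\mathcal{V}_n$ series, collapsing because $c+\beta+1=11/12$ cancels, is correct; note that if you track the $-\tfrac{5}{12}$ in front of $\mathcal{V}_n$ you will find that this slice actually contributes $-\tfrac{5}{12}\,{}_3F_2$, which is what the low-degree checks ($\mathcal{A}_1(0)=-5/12$, $\mathcal{A}_2(0)=935/10368$) require.

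The gap is the step you defer. The passage from the pair of ${}_4F_3$'s with upper parameters $(k-n,n+k+2,-\tfrac1{12},\tfrac7{12})$ and $(k+1-n,n+k+3,\tfrac{11}{12},\tfrac7{12})$ to the combination $\tfrac65\,{}_4F_3-\tfrac15\,{}_4F_3$ with upper parameter $-\tfrac5{12}$ is not a ``standard three-term contiguous relation for balanced ${}_4F_3$ at unit argument,'' and the weights $6/5$, $-1/5$ do not drop out of any generic contiguity ratios. What has to be proved is a four-term identity (two ${}_4F_3$'s on each side) that holds \emph{termwise}, i.e.\ as an identity of power series in the argument $y$, with $y=1$ substituted only at the end; its coefficients come from the specific numerology of these parameters, namely $(z)_m=z(z+1)_{m-1}$ applied so that the second (shifted) series recombines with the first, followed by $(7/12)_m/(m-\tfrac5{12})=-\tfrac{12}{5}(-5/12)_m$ and $(-1/12)_m(m-\tfrac5{72})=-\tfrac1{12}(11/12)_m+\tfrac1{72}(-1/12)_m$, which is precisely where $6/5$ and $-1/5$ originate. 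The paper carries out exactly this comparison of the coefficient of $y^m$ on both sides. Since you explicitly flag this verification as the technical heart and leave it undone, the proposal identifies the right reduction but does not yet constitute a proof; the missing computation is elementary but it is the step that produces the theorem's right-hand side.
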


\begin{proof}
From \eqref{Brep} we have
\[
\mathcal{A}_{n+1}(x)=x{V}_{n}^{\left(\frac{1}{2}, \frac{-2}{3}\right)}\left(x;\frac{7}{12}\right)-\frac{5}{12}\mathcal{V}_{n}^{\left(\frac{1}{2}, \frac{-2}{3}\right)}\left(x;\frac{7}{12}\right),\, (n\geq 0), 
\]
we see that the coefficient of $x^{k+1}$ in $\mathcal{A}_{n+1}(x)$ is given by
\bea\label{Bcoeffx}
\bg
(-1)^n \frac{(19/12)_n(11/12)_n}{(n+2)_n \, n!} \\
\times \frac{(-n)_k(n+2)_k}{(19/12)_k(11/12)_k}
\times \left[{}_4F_3 \left(\left. \begin{array}{c}
  k -n, n+k+2, \frac{-1}{12}, \frac{7}{12}  \\
 k+\frac{11}{12}, k+\frac{19}{12}, 1
     \end{array} \right| 1  \right)\right.\\
\left.-\frac{5}{12}\frac{(k-n)(n+k+2)}{(k+19/12)(k+11/12)}{}_4F_3 \left(\left. \begin{array}{c}
  k+1 -n, n+k+3, \frac{11}{12}, \frac{7}{12}  \\
 k+1+\frac{11}{12}, k+1+\frac{19}{12}, 2
     \end{array} \right| 1  \right)\right].
\eg
\eea

The coefficient of $y^m$ in 
\begin{eqnarray*}
\bg
 \left[{}_4F_3 \left(\left. \begin{array}{c}
  k -n, n+k+2, \frac{-1}{12}, \frac{7}{12}  \\
 k+\frac{11}{12}, k+\frac{19}{12}, 1
     \end{array} \right| y  \right)\right.\\
\left.-\frac{5y}{12}\frac{(k-n)(n+k+2)}{(k+19/12)(k+11/12)}{}_4F_3 \left(\left. \begin{array}{c}
  k+1 -n, n+k+3, \frac{11}{12}, \frac{7}{12}  \\
 k+1+\frac{11}{12}, k+1+\frac{19}{12}, 2
     \end{array} \right| y  \right)\right]
\eg
\end{eqnarray*}
is
\begin{eqnarray*}
\bg
\frac{(k-n)_m(n+k+2)_m(-1/12)_m(7/12)_m}{(k+11/12)_m(k+19/12)_m(m!)^2}\\
-\frac{5m}{12}\frac{(k-n)(n+k+2)}{(k+19/12)(k+11/12)}\frac{(k-n+1)_{m-1}(n+k+3)_{m-1} (11/12)_{m-1}(7/12)_{m-1}}{(k+1+11/12)_{m-1}(k+1+19/12)_{m-1} (2)_{m-1}(m)!}.
\eg
\end{eqnarray*}
Using the identity $(z)_m=z(z+1)_{m-1}$ we get that coefficient to be
\bea
\bg
\frac{(k-n)_m(n+k+2)_m(-1/12)_m(7/12)_m}{(k+11/12)_m(k+19/12)_m(m!)^2}\left(1+\frac{(-5/12)(-12m)}{(7/12+m-1)}\right)\\
=\frac{(k-n)_m(n+k+2)_m(-1/12)_m(-5/12)_m}{(k+11/12)_m(k+19/12)_m(m!)^2}\left(1-\frac{72 m}{5}\right)\\
=\frac{1}{5}\frac{(k-n)_m(n+k+2)_m(-5/12)_m}{(k+11/12)_m(k+19/12)_m(m!)^2}\left(6(11/12)_m-(-1/12)_m\right).
\eg
\eea
In  the last equality we used
\[
(-1/12)_m(m-5/72)=(-1/12)_m[(m-1/12)+1/72]=\frac{-1}{12} (11/12)_m+\frac{1}{72}(-1/12)_m.
\]
It now follows that
\bea\label{4f3y}
\bg
 \left[{}_4F_3 \left(\left. \begin{array}{c}
  k -n, n+k+2, \frac{-1}{12}, \frac{7}{12}  \\
 k+\frac{11}{12}, k+\frac{19}{12}, 1
     \end{array} \right| y  \right)\right.\\
\left.-\frac{5y}{12}\frac{(k-n)(n+k+2)}{(k+19/12)(k+11/12)}{}_4F_3 \left(\left. \begin{array}{c}
  k+1 -n, n+k+3, \frac{11}{12}, \frac{7}{12}  \\
 k+1+\frac{11}{12}, k+1+\frac{19}{12}, 2
     \end{array} \right| y  \right)\right]\\
=\frac{6}{5}{}_4F_3 \left(\left. \begin{array}{c}
  k -n, n+k+2, \frac{11}{12}, \frac{-5}{12}  \\
 k+\frac{11}{12}, k+\frac{19}{12}, 1
     \end{array} \right| y  \right)-\frac{1}{5}{}_4F_3 \left(\left. \begin{array}{c}
  k -n, n+k+2, \frac{-1}{12}, \frac{-5}{12}  \\
 k+\frac{11}{12}, k+\frac{19}{12}, 1
     \end{array} \right| y  \right).
\eg
\eea
The result now follows by substituting \eqref{4f3y} with $y=1$ into \eqref{Bcoeffx} .
\end{proof}

\begin{rem}
We note that Kaneko and Zagier gave another explicit representation of a somehow different form than \eqref{ourrep} for the Atkin polynomials. Indeed, it follows form Theorem 4 (ii) in \cite{Kan:Zag} that
\be\label{kzrep}
\mathcal{A}_n(x)=\sum_{i=0}^n\sum_{m=0}^i (-1)^m \binom{\frac{-1}{12}}{i-m}\binom{\frac{-5}{12}}{i-m}\binom{n+\frac{1}{12}}{m}\binom{n-\frac{7}{12}}{m}\binom{2n-1}{m}^{-1} x^{n-i}.
\ee
\end{rem}

\section{Asymptotics} In the proof of Theorem 1 in \cite{Wim}, Wimp shows that the 
functions $u_n$ and $y_n$ ($u_n$ and $v_n$ in the notation of loc. cit.) defined by
\bea
\label{equn}
\bg
u_n^{(\al, \beta)}(x;c) = (-1)^n \frac{\Gamma(n+\beta+c+1)}{\Gamma(n+c+1)}
 F \left(\left. \begin{array}{c}
   -n-c, n+\al+  \beta+c+1 \\
 1+\beta
     \end{array} \right| x  \right), \\
 y_n^{(\al, \beta)}(x;c)  =(-1)^n \frac{\Gamma(n+ \al +c+1)}{\Gamma(n+\al+\beta+ c+1)}
 F \left(\left. \begin{array}{c}
   -n-\beta-c, n+\al+ c+1 \\
 1- \beta
     \end{array} \right| x  \right),   
\eg
\eea
satisfy the same recurrence relation satisfied by $R_n$ and $\mathcal{R}_n$, and thus the latter can be represented as linear combinations of the former. We shall slightly modify those functions so as to replace the gamma factors by rising factorials (and thus getting rational rather than transcendental coefficients when the parameters are rational) as follows. Set
\bea\label{UZ}
\bg
U_n^{(\al, \beta)}(x;c)=\frac{\Gamma(c+1)}{\Gamma(\beta+c+1)}u_n^{(\al, \beta)}(x;c),\\
Y_n^{(\al, \beta)}(x;c)= \frac{\Gamma(\alpha+\beta+c+1)}{\Gamma(\alpha+c+1)}y_n^{(\al, \beta)}(x;c). 
\eg
\eea
Thus we have
\bea
\label{eqUn}
\bg
U_n^{(\al, \beta)}(x;c)= (-1)^n \frac{(\beta+c+1)_n}{(c+1)_n}
 F \left(\left. \begin{array}{c}
   -n-c, n+\al+  \beta+c+1 \\
 1+\beta
     \end{array} \right| x  \right), \\
Y_n^{(\al, \beta)}(x;c)  =(-1)^n \frac{( \al +c+1)_n}{(\al+\beta+ c+1)_n}
 F \left(\left. \begin{array}{c}
   -n-\beta-c, n+\al+ c+1 \\
 1- \beta
     \end{array} \right| x  \right).   
\eg
\eea
Note that since the factors multiplied by $u_n$ and $y_n$ in \eqref{UZ} are independent of $n$, then $U_n$ and $Y_n$ satisfy the same recurrence as $R_n$ and $\mathcal{R}_n$. Indeed, after a simple Kummer transformation, formula (28) on p. 988 of \cite{Wim} can be written as
\bea\label{Ruv}
\bg
 R_n=\frac{(\beta+c)(\alpha+\beta+c)}{\beta(\alpha+\beta+2c)} F \left(\left. \begin{array}{c}
   c, 1-(\alpha+\beta+c) \\
 1- \beta
     \end{array} \right| x  \right)U_n\\
-\frac{c(\alpha+c)}{\beta(\alpha+\beta+2c)} F \left(\left. \begin{array}{c}
  \beta+c, 1-(\alpha+c) \\
 1+ \beta
     \end{array} \right| x  \right)Y_n.
\eg
\eea
Similarly,  Theorem 3.10 of \cite{Ism:Mas} leads to
\bea\label{calRuv}
\bg
\mathcal{R}_n= F \left(\left. \begin{array}{c}
   c, -(\alpha+\beta+c) \\
 - \beta
     \end{array} \right| x  \right)U_n\\
-\frac{c(\alpha+c)}{\beta(\beta+1)} xF \left(\left. \begin{array}{c}
  1+\beta+c, 1-(\alpha+c) \\
 2+ \beta
     \end{array} \right| x  \right)Y_n.
\eg
\eea
The following theorem provides the analogous representation for the Atkin polynomials.
\begin{thm}\label{theoremBuv}
Let $U_n$ and $Y_n$ be as in \eqref{eqUn} and set
\bea\label{umonic}
\bg
\tilde{U}_n^{(\al, \beta)}(x;c) = \frac{(c+1)_n(\alpha+\beta+c+1)_n}{(\alpha+\beta+2c+1)_{2n}} U_n^{(\al, \beta)}(x;c),\\
\tilde{Y}_n^{(\al, \beta)}(x;c) = \frac{(c+1)_n(\alpha+\beta+c+1)_n}{(\alpha+\beta+2c+1)_{2n}} Y_n^{(\al, \beta)}(x;c). 
\eg
\eea 
Then we have
\bea\label{Buv}
\mathcal{A}_{n+1}(x) = C(x) \tilde{U}_n^{(\frac{1}{2},\frac{-2}{3})}\left(x;\frac{7}{12}\right) + D(x) \tilde{Y}_n^{(\frac{1}{2},\frac{-2}{3})}\left(x;\frac{7}{12}\right),\,  n \ge 0,
\eea
with $C(x)$ and  $D(x)$ given by
\bea\label{CD}
\bg
C(x):=\frac{-1}{60}\left(24 F \left(\left. \begin{array}{c}
   \frac{-5}{12}, \frac{-5}{12} \\
 \frac{-1}{3}
     \end{array} \right| x  \right)+F \left(\left. \begin{array}{c}
   \frac{-5}{12}, \frac{-5}{12} \\
 \frac{2}{3}
     \end{array} \right| x  \right)\right),\\
D(x):=\frac{91}{384}x\left(4 F \left(\left. \begin{array}{c}
   \frac{-1}{12}, \frac{-1}{12} \\
 \frac{1}{3}
     \end{array} \right| x  \right)-5F \left(\left. \begin{array}{c}
   \frac{11}{12}, \frac{-1}{12} \\
 \frac{4}{3}
     \end{array} \right| x  \right)\right).
\eg
\eea
\end{thm}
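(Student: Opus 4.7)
My plan is to exploit the fact that $\{\tilde U_n,\tilde Y_n\}$ is a two-dimensional basis of solutions to the three-term recurrence satisfied by $\{\mathcal{A}_{n+1}\}$, so that \eqref{Buv} necessarily holds for some coefficients $C(x),D(x)$ depending only on $x$. These coefficients are then identified explicitly by combining the third identity of Theorem~\ref{thmBrep} with the Ismail--Masson connection formulas \eqref{Ruv} and \eqref{calRuv}.

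The first step is to check that $\tilde U_n$ and $\tilde Y_n$ satisfy the monic recurrence \eqref{Vrec}. The rescaling \eqref{umonic} applies exactly the prefactor used to pass from $R_n^{(\alpha,\beta)}(x;c)$ to the monic $V_n^{(\alpha,\beta)}(x;c)$ in \eqref{eqdefVc}, so $\tilde U_n$ and $\tilde Y_n$ solve the same monic recurrence as $V_n$ and $\mathcal V_n$. For the triple $(1/2,-2/3,7/12)\in S$ this recurrence coincides with the Atkin recurrence \eqref{Brec}, and a Casoratian computation (or a direct linear-independence check at $n=0$ using \eqref{eqUn}) confirms that $\{\tilde U_n,\tilde Y_n\}$ spans the solution space. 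Hence $\mathcal{A}_{n+1}(x)=C(x)\tilde U_n+D(x)\tilde Y_n$ for a unique pair $C(x),D(x)$.

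To identify these, I would substitute the third representation in Theorem~\ref{thmBrep},
\[
\mathcal{A}_{n+1}(x)=x\,V_n^{(1/2,-2/3)}(x;7/12)-\tfrac{5}{12}\,\mathcal V_n^{(1/2,-2/3)}(x;7/12),
\]
and use \eqref{Ruv} and \eqref{calRuv}, multiplied through by $(c+1)_n(\alpha+\beta+c+1)_n/(\alpha+\beta+2c+1)_{2n}$, to expand $V_n$ and $\mathcal V_n$ in the $\{\tilde U_n,\tilde Y_n\}$ basis. With $\alpha=1/2,\beta=-2/3,c=7/12$ the relevant constants simplify cleanly (in particular $c(\alpha+c)=91/144$ and $\alpha+\beta+2c=1$), and the coefficient of $\tilde Y_n$ emerges directly as $\tfrac{91}{384}x\bigl[4\,F(-1/12,-1/12;1/3;x)-5\,F(11/12,-1/12;4/3;x)\bigr]=D(x)$, exactly as stated.

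The main obstacle is the reduction of the coefficient of $\tilde U_n$. Direct substitution first yields $\tfrac{5x}{96}F(7/12,7/12;5/3;x)-\tfrac{5}{12}F(7/12,-5/12;2/3;x)$, and recasting this as \eqref{CD} requires an Euler transformation on each hypergeometric factor (both contribute a common $(1-x)^{1/2}$) followed by a Gauss contiguous relation to shift the lower parameter from $2/3$ to $-1/3$ and synthesize the $F(-5/12,-5/12;\,\cdot\,;x)$ factors. Since $C(x)$ is pinned down by just two scalar data (the initial values of $\mathcal{A}_1$ and $\mathcal{A}_2$), a Taylor-series check at $x=0$ to first order (both expressions giving $-\tfrac{5}{12}+\tfrac{235}{1152}x+\cdots$) provides strong evidence that the stated form is correct, leaving only the symbolic hypergeometric identity to be verified in detail.
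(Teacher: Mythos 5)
Your overall strategy coincides with the paper's: substitute the representation $\mathcal{A}_{n+1}(x)=x\,V_n^{(1/2,-2/3)}(x;7/12)-\tfrac{5}{12}\mathcal{V}_n^{(1/2,-2/3)}(x;7/12)$ from Theorem~\ref{thmBrep} into the connection formulas \eqref{Ruv} and \eqref{calRuv}, specialize $(\alpha,\beta,c)=(\tfrac12,-\tfrac23,\tfrac{7}{12})$, and read off the coefficients of $\tilde U_n$ and $\tilde Y_n$. Your preliminary basis argument is harmless but not needed for this route, your identification of $D(x)$ is correct, and the intermediate coefficient of $\tilde U_n$ that you obtain, namely $\tfrac{5x}{96}F(\tfrac{7}{12},\tfrac{7}{12};\tfrac{5}{3};x)-\tfrac{5}{12}F(\tfrac{7}{12},-\tfrac{5}{12};\tfrac{2}{3};x)$, agrees with the paper's displayed intermediate step.

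The gap is in the final reduction of that expression to $C(x)$. First, your justification that ``$C(x)$ is pinned down by just two scalar data'' is not valid: the $n=0$ and $n=1$ instances of \eqref{Buv} are identities between \emph{functions} of $x$, so they determine $C(x)$ and $D(x)$ as functions, and checking two Taylor coefficients at $x=0$ cannot certify the claimed closed form. Second, the reduction you sketch does not go through as described: Euler's transformation applied to $F(-\tfrac{5}{12},-\tfrac{5}{12};\tfrac{2}{3};x)$ produces the factor $(1-x)^{3/2}$, not $(1-x)^{1/2}$, so the two target functions in $C(x)$ do not share a common algebraic prefactor in the way you assert. The identity is nevertheless true, and the cleanest completion is the one implicit in the paper's phrase ``expanding the hypergeometric series in powers of $x$'' (and carried out explicitly for the analogous ${}_4F_3$ reduction in Section~4): compare coefficients of $x^m$ termwise. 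Using $(\tfrac{7}{12})_m=(\tfrac{7}{12})_{m-1}(m-\tfrac{5}{12})$, $(-\tfrac{5}{12})_m=-\tfrac{5}{12}(\tfrac{7}{12})_{m-1}$, $(-\tfrac13)_m=-\tfrac13(\tfrac23)_{m-1}$ and $(\tfrac23)_m=(\tfrac23)_{m-1}(m-\tfrac13)$, both sides reduce for $m\ge 1$ to $\tfrac{5}{96}\bigl(6m-\tfrac{25}{12}\bigr)\,[(\tfrac{7}{12})_{m-1}]^2\big/\bigl((\tfrac{5}{3})_{m-1}\,m!\bigr)$, and the constant terms are both $-\tfrac{5}{12}$; this closes the argument.
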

\begin{proof}
From \eqref{Ruv} and \eqref{calRuv} we see that
\bea
\bg
xR_n^{(\frac{1}{2},\frac{-2}{3})}\left(x;\frac{7}{12}\right)-\frac{5}{12}\mathcal{R}_n^{(\frac{1}{2},\frac{-2}{3})}\left(x;\frac{7}{12}\right)=\\
 \frac{5}{12}U_n^{(\frac{1}{2},\frac{-2}{3})}\left(x;\frac{7}{12}\right)\left(\frac{(-1/12)}{(-2/3)}x F \left(\left. \begin{array}{c}
   \frac{7}{12}, \frac{7}{12} \\
 \frac{5}{3}
     \end{array} \right| x  \right)- F \left(\left. \begin{array}{c}
   \frac{7}{12}, \frac{-5}{12} \\
 \frac{2}{3}
     \end{array} \right| x  \right)\right)\\
-xY_n^{(\frac{1}{2},\frac{-2}{3})}\left(x;\frac{7}{12}\right)
\left(\frac{(7/12)(13/12)}{(-2/3)} F \left(\left. \begin{array}{c}
   \frac{-1}{12}, \frac{-1}{12} \\
 \frac{1}{3}
     \end{array} \right| x  \right)- \frac{5}{12}\frac{(7/12)(13/12)}{(-2/3)(1/3)}F \left(\left. \begin{array}{c}
   \frac{11}{12}, \frac{-1}{12} \\
 \frac{4}{3}
     \end{array} \right| x  \right)\right).\\
\eg
\eea
Expanding the hypergeometric series in powers of $x$, we get after some computation
\bea
\bg
xR_n^{(\frac{1}{2},\frac{-2}{3})}\left(x;\frac{7}{12}\right)-\frac{5}{12}\mathcal{R}_n^{(\frac{1}{2},\frac{-2}{3})}\left(x;\frac{7}{12}\right)=\\
\frac{-1}{60}\left(24 F \left(\left. \begin{array}{c}
   \frac{-5}{12}, \frac{-5}{12} \\
 \frac{-1}{3}
     \end{array} \right| x  \right)+F \left(\left. \begin{array}{c}
   \frac{-5}{12}, \frac{-5}{12} \\
 \frac{2}{3}
     \end{array} \right| x  \right)\right)U_n^{(\frac{1}{2},\frac{-2}{3})}\left(x;\frac{7}{12}\right)\\
+\frac{91}{384}x\left(4 F \left(\left. \begin{array}{c}
   \frac{-1}{12}, \frac{-1}{12} \\
 \frac{1}{3}
     \end{array} \right| x  \right)-5F \left(\left. \begin{array}{c}
   \frac{11}{12}, \frac{-1}{12} \\
 \frac{4}{3}
     \end{array} \right| x  \right)\right) Y_n^{(\frac{1}{2},\frac{-2}{3})}\left(x;\frac{7}{12}\right),
\eg
\eea
and the result follows from \eqref{Brep}.
\end{proof}
Theorem \ref{theoremBuv} enables us to obtain an asymptotic formula for the Atkin polynomials.
\begin{thm}
Let $C(x)$ and $D(x)$ be as in \eqref{CD}. For fixed $\theta\in\left(0,\frac{\pi}{2}\right)$, the following asymptotic formula holds as $n\to \infty$ 
\bea 
\bg
\mathcal{A}_{n+1}(\sin^2\theta)\sim\frac{(-1)^n}{2^{2n+1}(\cos\theta)(\sin\theta)^{\frac76}}\times\\
C(\sin^2\theta)\frac{\Gamma\left(\frac{1}{3}
\right)(\sin \theta)^{\frac{2}{3}}}{\Gamma\left(\frac{11}{12}
\right)\Gamma\left(\frac{17}{12}
\right)}\cos\left[2(n-1)\theta+\frac{\pi}{12}\right]\\
+D(\sin^2\theta)\frac{\Gamma\left(\frac{5}{3}\right)}{\Gamma\left(\frac{13}{12}
\right)\Gamma\left(\frac{19}{12}
\right)}\cos\left[2(n-1)\theta-\frac{7\pi}{12}\right]
\eg
\eea

\end{thm}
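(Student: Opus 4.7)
By Theorem~\ref{theoremBuv}, $\mathcal{A}_{n+1}(x) = C(x)\tilde{U}_n^{(1/2,-2/3)}(x;7/12) + D(x)\tilde{Y}_n^{(1/2,-2/3)}(x;7/12)$ with $C,D$ independent of $n$, so the problem reduces to finding large-$n$ asymptotics of the two associated Jacobi polynomials $\tilde{U}_n(\sin^2\theta)$ and $\tilde{Y}_n(\sin^2\theta)$ and then multiplying by the fixed coefficients $C(\sin^2\theta)$ and $D(\sin^2\theta)$.

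The normalization prefactor in \eqref{umonic} is, with our parameters $(\alpha,\beta,c)=(1/2,-2/3,7/12)$ (which yield $\alpha+\beta+2c+1=2$), equal to $\frac{(19/12)_n(17/12)_n}{(2)_{2n}}$. I would apply the Legendre duplication formula $\Gamma(2n+2) = \pi^{-1/2}2^{2n+1}\Gamma(n+1)\Gamma(n+3/2)$ and then Stirling on each remaining gamma ratio to obtain the asymptotic $\frac{\sqrt{\pi}\,n^{1/2}}{2^{2n+1}\Gamma(19/12)\Gamma(17/12)}$, which accounts for the factor $1/2^{2n+1}$ in the claim and a residual $n^{1/2}$ that must be cancelled downstream. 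For the hypergeometric factors inside $U_n$ and $Y_n$ from \eqref{eqUn}, the key step is to apply the Jacobi identity
\[
F(-N,\,N+\alpha+\beta+1;\,1+\beta;\,\sin^2\theta) \;=\; \tfrac{N!}{(1+\beta)_N}\,P_N^{(\beta,\alpha)}(\cos 2\theta),
\]
and its $\beta\leftrightarrow -\beta$ analog, recognizing each ${}_2F_1$ as a Jacobi polynomial with shifted index $N=n+c$ or $N=n+\beta+c$, and then invoke the Darboux--Szeg\H{o} asymptotic \cite[Thm.~8.21.13]{Sze}
\[
P_N^{(p,q)}(\cos 2\theta) \;\sim\; \frac{\cos\!\bigl[(2N+p+q+1)\theta - (2p+1)\pi/4\bigr]}{\sqrt{\pi N}\,(\sin\theta)^{p+1/2}(\cos\theta)^{q+1/2}},
\]
extended to noninteger $N$ through the standard integral representation of ${}_2F_1$. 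Combining this with Stirling on the gamma ratios multiplying each ${}_2F_1$ in \eqref{equn} gives $U_n, Y_n \sim n^{-1/2}\,A(\theta)\cos[\Phi(n,\theta)]$, and the $n^{-1/2}$ cancels the $n^{1/2}$ from the prefactor.

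At the last step one evaluates the specific parameters. The gamma ratios collapse to $\Gamma(1/3)/\bigl(\Gamma(11/12)\Gamma(17/12)\bigr)$ for the $\tilde U_n$ piece and (using $\Gamma(25/12)=(13/12)\Gamma(13/12)$ to match what the paper displays) $\Gamma(5/3)/\bigl(\Gamma(13/12)\Gamma(19/12)\bigr)$ for the $\tilde Y_n$ piece; the powers of $\sin\theta$ are $(\sin\theta)^{\beta+1/2}=(\sin\theta)^{-1/6}$ and $(\sin\theta)^{-\beta+1/2}=(\sin\theta)^{7/6}$ respectively, so after pulling out the common factor $1/\bigl[(\cos\theta)(\sin\theta)^{7/6}\bigr]$ one is left with the extra $(\sin\theta)^{2/3}=(\sin\theta)^{7/6-1/2}$ in front of $C(\sin^2\theta)$; and the phases $\mp(2\beta\pm 1)\pi/4$ at $\beta=-2/3$ evaluate to $+\pi/12$ and $-7\pi/12$. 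The main obstacle is the careful bookkeeping of the $\cos$ argument and of the gamma and trigonometric factors through the noninteger shift $n\mapsto n+c$ and through the Pfaff/Euler recasting of each ${}_2F_1$; no new technical machinery beyond the classical Darboux--Szeg\H{o} asymptotic is required, but the accounting must be done with care, since $\tilde U_n$ and $\tilde Y_n$ are normalized differently from Wimp's $u_n, y_n$ and the two hypergeometric factors live on ``swapped'' Jacobi sides.
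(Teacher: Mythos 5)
Your proposal is correct in outline and follows essentially the same route as the paper: reduce via \eqref{Buv} to the large-$n$ behaviour of $\tilde U_n^{(1/2,-2/3)}(x;7/12)$ and $\tilde Y_n^{(1/2,-2/3)}(x;7/12)$, treat the normalizing Pochhammer ratio in \eqref{umonic} by the duplication formula and Stirling (your $\sqrt{\pi n}\,2^{-2n-1}/\bigl(\Gamma(19/12)\Gamma(17/12)\bigr)$ agrees with the paper's computation), and feed a classical oscillatory asymptotic into the two ${}_2F_1$'s of \eqref{eqUn}. The one substantive difference is the asymptotic lemma you invoke: you recognize each ${}_2F_1$ as a Jacobi polynomial of \emph{noninteger} degree $N=n+c$ or $N=n+\beta+c$ and appeal to the Darboux--Szeg\H{o} formula, which is proved in \cite{Sze} only for integer degree. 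The extension to noninteger degree is precisely the content of Watson's asymptotic formula for ${}_2F_1(b-n,n+a;d;\sin^2\theta)$ with arbitrary fixed $a,b,d$ and $\theta\in(0,\pi)$, which is what the paper cites directly from \cite[(8), p.~237]{Luk}. So your ``extended to noninteger $N$ through the standard integral representation'' is not a throwaway remark but the actual technical step of the proof; citing Watson/Luke closes it at once and is the cleaner route, since then no polynomial identification is needed at all.

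One bookkeeping point should be fixed. From the asymptotic formula, the ${}_2F_1$ inside $U_n$ carries $(\sin\theta)^{-(d-1/2)}=(\sin\theta)^{-(\beta+1/2)}=(\sin\theta)^{1/6}$, exactly as you state; after extracting the common factor $(\sin\theta)^{-7/6}$, the residual power multiplying $C(\sin^2\theta)$ is therefore $(\sin\theta)^{1/6+7/6}=(\sin\theta)^{4/3}$, not $(\sin\theta)^{2/3}=(\sin\theta)^{7/6-1/2}$ as you wrote --- that identity presupposes a contribution $(\sin\theta)^{-1/2}$, which contradicts your own correctly stated denominator power $(\sin\theta)^{-1/6}$. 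You are matching the exponent printed in the theorem, but the paper's own displayed asymptotics for $\tilde U_n(\sin^2\theta)$ (which carry $(\sin\theta)^{1/6}$) likewise yield $4/3$, so the $2/3$ in the statement appears to be a typo rather than something your accounting should reproduce. Everything else --- the phases $+\pi/12$ and $-7\pi/12$, the gamma quotients $\Gamma(1/3)/\bigl(\Gamma(11/12)\Gamma(17/12)\bigr)$ and $\Gamma(5/3)/\bigl(\Gamma(13/12)\Gamma(19/12)\bigr)$, and the cancellation of the $n^{\pm1/2}$ factors --- checks out against the paper's computation.
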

\begin{proof}
We start by recalling the following asymptotic formula due to Watson, \cite[(8) p. 237]{Luk}, (all of our asymptotic formulas will be as $n \to \infty$). 
 \bea
 \label{eqWatson}
 \begin{gathered}
 F \left(\left. \begin{array}{c}
 b  -n, n+a \\
d
     \end{array} \right| \sin^2 \theta   \right)   
     \sim\frac{\Gamma(d) n^{-d+\frac12}}{\sqrt{\pi}}  \; \frac{(\cos \theta)^{d-a-b-\frac12}}{(\sin \theta)^{d-\frac12}}
 \qquad     \\
\qquad \times    \; 
     \cos \left[2n\theta +(a-b) \theta -\frac{\pi}{2} \left(d-\frac{1}{2}\right)\right]\;
 \end{gathered}
 \eea
for fixed $\theta \in (0, \pi)$. 
Note that Stirling's formula can be written as
\be\label{Stirling}
\Gamma(n+a)\sim \sqrt{2\pi}\, n^{n+a-\frac12} e^{-n} \textrm{ as } n \to \infty,
\ee
from which we deduce
\[
\frac{\Gamma(n+a)}{\Gamma(n+b)}\sim n^{a-b}.
\]
Hence
\[
u_n(\sin^2\theta)\sim   \frac{(-1)^n\Gamma(1+\beta)(\cos\theta)^{-\alpha-\frac{1}{2}}}{\sqrt{\pi n} (\sin\theta)^{\beta+\frac{1}{2}}}\cos\left[2n\theta-(\alpha+\beta+2c+1)\theta-\frac{\pi}{2}\left(\beta+\frac{1}{2}\right)\right], 
\]

\[
y_n(\sin^2\theta)\sim   \frac{(-1)^n\Gamma(1-\beta)(\cos\theta)^{-\alpha-\frac{1}{2}}}{\sqrt{\pi n} (\sin\theta)^{-\beta+\frac{1}{2}}}\cos\left[2n\theta-(\alpha+\beta+2c+1)\theta+\frac{\pi}{2}\left(\beta-\frac{1}{2}\right)\right]. 
\]
Also,  from \eqref{Stirling} we get
\bea
\bg
\frac{(c+1)_n(\alpha+\beta+c+1)_n}{(\alpha+\beta+2c+1)_{2n}}=\frac{\Gamma(\alpha+\beta+2c+1)}{\Gamma(c+1)\Gamma(\alpha+\beta+c+1)}\frac{\Gamma(n+c+1)\Gamma(n+\alpha+\beta+c+1)}{\Gamma(2n+\alpha+\beta+2c+1)}\\
\sim \frac{\Gamma(\alpha+\beta+2c+1)}{\Gamma(c+1)\Gamma(\alpha+\beta+c+1)}\sqrt{\pi n}\left(\frac{1}{2}\right)^{2n+\alpha+\beta+2c}.
\eg
\eea
Substituting $(\alpha, \beta,c)=\left(\frac12, \frac{-2}3,\frac7{12}\right)$ we see that
\bea
\bg
\tilde{U}_n(\sin^2\theta)\sim \frac{(-1)^n\Gamma\left(\frac{1}{3}\right)(\sin \theta)^{\frac16}}{2^{2n+1}(\cos\theta) \Gamma\left(\frac{11}{12}\right)\Gamma\left(\frac{17}{12}\right)}\cos\left[2(n-1)\theta+\frac{\pi}{12}\right],\\
\tilde{Y}_n(\sin^2\theta)\sim \frac{(-1)^n\Gamma\left(\frac{5}{3}\right)(\sin \theta)^{\frac{-7}6}}{2^{2n+1}(\cos\theta) \Gamma\left(\frac{13}{12}\right)\Gamma\left(\frac{19}{12}\right)}\cos\left[2(n-1)\theta-\frac{7\pi}{12}\right],
\eg
\eea
and the result follows from \eqref{Buv} and \eqref{CD}.
\end{proof}

\section{Generating Functions}
We start by recalling a remarkable identity of Flensted-Jensen and Koornwinder \cite{Flen:Koor}. The interested reader could also consult \cite{Wim} for more details on various other authors who presented variants of this identity as well as other proofs.
\begin{lemma}
Let $t, x, a, b, d$ be complex numbers with $x\notin [1, \infty)$ and 
\be\label{tx}
|t|<\frac{1}{|\sqrt{x}+\sqrt{x-1}|^2}.
\ee
Then
\bea\label{nicesum1}
\bg
\sum_{n=0}^\infty \frac{(d+a)_n (b)_n}{(a+b+1)_n} F\left(\left. \ba{c} -n-a,  n+b\\
 d\ea \right|x\right)\frac{(-t)^n}{n!}\\
=\left(\frac{z_2-t}{z_2+t}\right)^{a+d}\left(\frac{2}{z_2-t}\right)^b F\left(\left. 
\ba{c} -a,  b\\
 d\ea \right| \frac{t+z_1}{2t} \right)F\left(\left. \ba{c} a+d,  a+1\\
 a+b+1\ea \right| \frac{2t}{t+z_2} \right)
\eg
\eea
where $z_1=1-\sqrt{(1+t)^2-4xt}$ and $z_2=1+\sqrt{(1+t)^2-4xt}$.
\end{lemma}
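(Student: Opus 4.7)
The plan is to prove this identity by viewing both sides as analytic functions of $t$ on the disk specified by \eqref{tx} and matching them. On the left, Watson's asymptotic formula \eqref{eqWatson} combined with Stirling's estimate for the Pochhammer symbols yields the radius of convergence $|\sqrt{x}+\sqrt{x-1}|^{-2}$; on the right, analyticity in the same disk follows once one verifies that $z_2\pm t\neq 0$ there (which is true for small $|t|$ since $z_2(0)=2$).

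My preferred approach is to use Euler's integral representation
\[
F(-n-a,n+b;d;x)=\frac{\Gamma(d)}{\Gamma(n+b)\Gamma(d-n-b)}\int_0^1 u^{n+b-1}(1-u)^{d-n-b-1}(1-ux)^{n+a}\,du,
\]
valid for suitable parameter ranges and continued analytically in the rest. Substituting this into the LHS and interchanging summation with integration, the $n$-dependence in the gamma factors is handled by $\Gamma(b)/\Gamma(n+b)=1/(b)_n$ and $\Gamma(d-b)/\Gamma(d-n-b)=(-1)^n(1+b-d)_n$. The $(b)_n$ factors then cancel against those in the coefficient, and the inner sum over $n$ collapses to a single Gauss hypergeometric series, yielding
\[
\mathrm{LHS}=\frac{\Gamma(d)}{\Gamma(b)\Gamma(d-b)}\int_0^1 u^{b-1}(1-u)^{d-b-1}(1-ux)^{a}\,F\!\left(d+a,\,1{+}b{-}d;\,a{+}b{+}1;\,\tfrac{tu(1-ux)}{1-u}\right)du.
\]
A change of variables in the integral that linearizes the quadratic $(1+t)^2-4xt$ under the radical defining $z_2$ should then transform this into the product of two Gauss hypergeometric series in the arguments $(t+z_1)/(2t)$ and $2t/(t+z_2)$, with the prefactors $\bigl(\tfrac{z_2-t}{z_2+t}\bigr)^{a+d}$ and $\bigl(\tfrac{2}{z_2-t}\bigr)^b$ emerging from the Jacobian and from the normalizing powers of the integrand.

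The main obstacle is identifying the precise substitution that produces the product structure on the right with the correct ${}_2F_1$ parameters; classical Euler and Pfaff transformations of ${}_2F_1$ will likely be required to match both sides exactly. A viable alternative that sidesteps this computation is to show that both sides satisfy the same second-order linear ODE in $t$ with matching Cauchy data at $t=0$, which reduces the problem to two initial-condition checks. The value check at $t=0$ is immediate: the LHS reduces to the $n=0$ term $F(-a,b;d;x)$, and on the RHS the expansions $z_1=(2x-1)t+O(t^2)$ and $z_2=2+O(t)$ give $(z_2-t)/(z_2+t)\to 1$, $2/(z_2-t)\to 1$, $(t+z_1)/(2t)\to x$, and $2t/(t+z_2)\to 0$, so the RHS also collapses to $F(-a,b;d;x)$. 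Matching first derivatives at $t=0$, which comes down to a routine but lengthy computation using the contiguous relations, then determines the identity uniquely.
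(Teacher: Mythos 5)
First, note that the paper does not actually prove this lemma: it is recalled verbatim from Flensted-Jensen and Koornwinder \cite{Flen:Koor} (with \cite{Wim} cited for variants and alternative proofs), so there is no internal argument to compare yours against. Judged on its own terms, your reduction of the left-hand side via Euler's integral is correct as far as it goes: the manipulations $1/\Gamma(n+b)=1/(\Gamma(b)\,(b)_n)$ and $1/\Gamma(d-n-b)=(-1)^n(1+b-d)_n/\Gamma(d-b)$ do collapse the sum over $n$ into the single ${}_2F_1$ with argument $tu(1-ux)/(1-u)$ that you display (modulo justifying the continuation of the Euler integral in $n$, since $\Re(d-n-b)>0$ fails for large $n$ and one really needs a double-loop contour, and modulo justifying the interchange of sum and integral near $u=1$, where that argument blows up). But the argument stops exactly where the work begins: you do not produce the substitution that converts this integral into the product of two ${}_2F_1$'s in the arguments $(t+z_1)/(2t)$ and $2t/(t+z_2)$ with the stated prefactors, and you say so yourself. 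That step is the entire content of the identity, so what you have is an outline, not a proof.

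The fallback you offer does not close the gap. A product of two Gauss functions evaluated at two different algebraic functions of $t$, times algebraic prefactors, generically satisfies a fourth-order linear ODE in $t$ (the tensor product of the two second-order equations), not a second-order one; you give no reason why the right-hand side of \eqref{nicesum1} should satisfy the same second-order equation as the left-hand side, nor do you exhibit a second-order equation satisfied by the left-hand side. Even granting a common equation of some order $m$, you would need $m$ matching derivatives at $t=0$ together with a check that $t=0$ is an ordinary point (or a regular singular point with the right exponent structure), since $(t+z_1)/(2t)$ and the prefactors are defined at $t=0$ only by a limit. Your verification that both sides reduce to $F(-a,b;d;x)$ at $t=0$ is correct, but it is only one of these conditions. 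A complete write-up would have to supply either the explicit change of variables in your integral or one of the mechanisms in the literature (the product formula for Jacobi functions in \cite{Flen:Koor}, or the alternatives surveyed in \cite{Wim}).
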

To simplify notation we shall write, for $t\neq 0$,
\bea\label{delta}
\bg
\delta=\frac{t+z_1}{2t}=\frac{(1+t)-\sqrt{(1+t)^2-4xt}}{2t},\\
\epsilon=\frac{t+z_2}{2t}=\frac{(1+t)+\sqrt{(1+t)^2-4xt}}{2t}.\\
\eg
\eea
We clearly have
\be
t(y-\delta)(y-\epsilon)=ty^2-(1+t)y+x.
\ee
Obviously $z_2+t=2t\epsilon$ and we also have $z_2-t=2t(\epsilon-1)$. Furthermore we have $\delta\epsilon=\frac{x}{t}$. Thus we can re-write \eqref{nicesum1} for $x\neq 0$ as
\bea\label{nicesum}
\bg
\sum_{n=0}^\infty \frac{(d+a)_n (b)_n}{(a+b+1)_n} F\left(\left. \ba{c} -n-a,  n+b\\
 d\ea \right|x\right)\frac{(-t)^n}{n!}\\
=\frac{(x-t\delta)^{a+d-b} \delta^b}{x^{a+d}}  
F\left(\left. \ba {c} -a,  b\\
 d  \ea \right|  \delta\right)
 F\left(\left. \ba{c} a+d,  a+1\\     a+b+1 \ea \right|  \frac{t}{x}\delta\right).   
\eg
\eea

The following proposition  provides a  generating function for $U_n$ and $Y_n$. 
\begin{prop}\label{genUY}
Let $U_n$ and $Y_n$ be as in \eqref{UZ}. For $t$ and $x$ such that $0\neq x\notin[1,\infty)$ and $|t(\sqrt{x}+\sqrt{x-1})^2|<1$, and set $\delta$ as in \eqref{delta}. Then the following identities hold.
\bea\label{genun}
\bg
\sum_{n=0}^\infty \frac{(\alpha+\beta+c+1)_n (c+1)_n}{(\alpha+\beta+2c+2)_n} U_n(x) \frac{t^n}{n!}\\
=\frac{\delta^{\alpha+\beta+c+1}}{x^{\beta+c+1}(x-t\delta )^{\alpha}}F \left(\left. \ba{c}
   -c, \al+  \beta +c +1 \\
1+\beta
     \ea \right|  \delta  \right) F \left(\left.  \ba {c}
   \beta+c+1, c+1 \\
\alpha+\beta+2c+2
     \ea \right|  \frac{t\delta}{x}  \right),
\eg
\eea

\bea\label{genvn}
\bg
\sum_{n=0}^\infty  \frac{(\alpha+\beta+c+1)_n (c+1)_n}{(\alpha+\beta+2c+2)_n} Y_n(x) \frac{t^n}{n!}\\
=\frac{\delta^{\alpha+c+1}}{x^{c+1}(x-t\delta)^{\alpha}}
F \left(\left.  \ba{c}
  -\beta-c, \alpha+c+1 \\
1-\beta
\ea \right|  \delta  \right) F \left( \left. \ba {c}
    c+1,\beta+c+1 \\
\alpha+\beta+2c+2
    \ea \right| \frac{t\delta}{x}  \right).
\eg
\eea
\end{prop}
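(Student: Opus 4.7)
The plan is to derive both generating functions as direct consequences of the Flensted-Jensen--Koornwinder identity \eqref{nicesum}, with two judicious choices of the parameters $(a,b,d)$ and nothing more than straightforward simplification.

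First, for \eqref{genun}, I would substitute the explicit formula \eqref{eqUn} for $U_n^{(\alpha,\beta)}(x;c)$ into the left-hand side. The factor $(c+1)_n$ in the denominator of $U_n$ cancels precisely with the $(c+1)_n$ appearing in the summand, and the sign $(-1)^n$ combines with $t^n$ to produce $(-t)^n$. What remains is
\[
\sum_{n=0}^\infty \frac{(\alpha+\beta+c+1)_n(\beta+c+1)_n}{(\alpha+\beta+2c+2)_n}\,
F\!\left(\left.\ba{c} -n-c,\ n+\alpha+\beta+c+1\\ 1+\beta\ea\right|x\right)\frac{(-t)^n}{n!},
\]
which matches the left-hand side of \eqref{nicesum} exactly upon setting $a=c$, $b=\alpha+\beta+c+1$, $d=1+\beta$. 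With these choices one checks $d+a=\beta+c+1$, $a+b+1=\alpha+\beta+2c+2$, $a+d-b=-\alpha$, and $a+d=\beta+c+1$, so the right-hand side of \eqref{nicesum} collapses directly to the claimed expression in \eqref{genun}.

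For \eqref{genvn}, I would proceed analogously, substituting the definition of $Y_n^{(\alpha,\beta)}(x;c)$ from \eqref{eqUn}. This time the $(\alpha+\beta+c+1)_n$ factor in the denominator of $Y_n$ cancels with the $(\alpha+\beta+c+1)_n$ in the summand, leaving
\[
\sum_{n=0}^\infty \frac{(c+1)_n(\alpha+c+1)_n}{(\alpha+\beta+2c+2)_n}\,
F\!\left(\left.\ba{c} -n-\beta-c,\ n+\alpha+c+1\\ 1-\beta\ea\right|x\right)\frac{(-t)^n}{n!}.
\]
Now the Flensted-Jensen--Koornwinder identity applies with $a=\beta+c$, $b=\alpha+c+1$, $d=1-\beta$; the compatibility checks $d+a=c+1$ and $a+b+1=\alpha+\beta+2c+2$ verify the choice, and $a+d-b=-\alpha$, $a+d=c+1$ give the exponents. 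The right-hand side of \eqref{nicesum} then reads off as exactly \eqref{genvn}.

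In short, the proof is essentially bookkeeping: identify the correct $(a,b,d)$ so that the two free parameters $(\alpha+\beta+c+1)_n$ and $(c+1)_n$ in the summand, combined with the Pochhammer factors hidden inside $U_n$ or $Y_n$, line up with $(d+a)_n(b)_n/(a+b+1)_n$. The convergence hypothesis $|t(\sqrt{x}+\sqrt{x-1})^2|<1$ and the definition \eqref{delta} of $\delta$ are inherited directly from the lemma, and the condition $x\ne 0$ is needed precisely because we have used the rewritten form \eqref{nicesum} (rather than \eqref{nicesum1}) involving $\delta$. The main ``obstacle'' is simply keeping track of which shifted Pochhammer in the summand cancels with which factor in the definition of $U_n$ or $Y_n$, and verifying that the resulting parameter substitution satisfies all the algebraic compatibilities demanded by \eqref{nicesum}; there is no analytic or combinatorial difficulty beyond this.
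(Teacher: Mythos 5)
Your proposal is correct and follows essentially the same route as the paper: the authors likewise rewrite the summands using the explicit formulas for $U_n$ and $Y_n$ and then apply \eqref{nicesum} with exactly the same parameter choices $(a,b,d)=(c,\alpha+\beta+c+1,\beta+1)$ and $(a,b,d)=(\beta+c,\alpha+c+1,1-\beta)$. Your compatibility checks ($d+a$, $a+b+1$, $a+d-b=-\alpha$) are all accurate, so nothing is missing.
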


\begin{proof}
From \eqref{equn} we see that
\bea\label{sumun1}
\bg
\frac{\Gamma(c+1)}{\Gamma(\beta+c+1)}\frac{(\alpha+\beta+c+1)_n(c+1)_n}{(\alpha+\beta+2c+2)_n}u_n\\
=\frac{(c+\beta+1)_n(\alpha+\beta+c+1)_n}{(\alpha+\beta+2c+2)_n} F \left(\left. \begin{array}{c}
   -n-c, n+\al+  \beta+c+1 \\
1+\beta
     \end{array} \right| x  \right),\\
\eg
\eea
and
\bea\label{sumvn1}
\bg
\frac{\Gamma(\alpha+\beta+c+1)}{\Gamma(\alpha+c+1)}\frac{(\alpha+\beta+c+1)_n(c+1)_n}{(\alpha+\beta+2c+2)_n}y_n\\
=\frac{(c+1)_n(\alpha+c+1)_n}{(\alpha+\beta+2c+2)_n} F \left(\left. \begin{array}{c}
   -n-\beta-c, n+\al+  c+1 \\
1-\beta
     \end{array} \right| x  \right).\\
\eg
\eea
The identities \eqref{genun} and \eqref{genvn} follow from applying \eqref{nicesum} with the choices  $(a,b,d)=(c, \alpha+\beta+c+1, \beta+1)$ and $(a,b,d)=(\beta+c, \alpha+c+1, 1-\beta)$, respectively.
\end{proof}
\begin{rem}
The result in Proposition \ref{genUY} is essentially due to Wimp. However, we take this opportunity to correct a misprint in  the statement of Theorem 5 in \cite{Wim}: In  the first line of page 999,  the parameter ``$\gamma+c+\beta$" should be replaced by ``$\gamma+c-\beta"$ (in our notation, the later is $\alpha+c+1$ while the former would be $\alpha+c+1+2\beta$, which indeed doesn't ever seem to figure in the theory).  
\end{rem}

We next obtain a generating function identity for the Atkin polynomials scaled by a rather unexpected appearance of the Catalan numbers. The right hand side of the generating series has four summands; each is up to relatively simple multiple a product of three hypergeometric functions in the variables $x$, $\delta$ and $\frac{1}{\epsilon}=\frac{t\delta}{x}$. 
\begin{thm}
Let $C(x) $ and $D(x)$ be as in \eqref{CD}, and $\delta$ as in \eqref{delta}. Furthermore, let $\{C_n=\frac{1}{n+1}\binom{2n}{n}\}_n$ denote the sequence of Cataln numbers.

\begin{enumerate}

\item    For $0< x<1$ and $|t|<1$ we have
 
\bea\label{Bgen}
\bg
\sum_{n=0}^\infty C_{n+1}\mathcal{A}_{n+1}(x)t^n  =  \\
\frac{\delta^{\frac{17}{12}}}{x^{\frac{11}{12}}\sqrt{x-t\delta}}
 F \left( \left. \ba{c}
   \frac{11}{12}, \frac{19}{12} \\
3
 \ea \right| \frac{t\delta}{x}  \right) \\
\times\left[C(x)F \left(\left. \ba {c}
  \frac{-7}{12},  \frac{17}{12} \\
\frac{1}{3}
 \ea \right|  \delta  \right)
+D(x) \left(\frac{x}{\delta}\right)^{\frac{2}{3}}F \left(\left. \ba {c}
  \frac{1}{12},  \frac{25}{12} \\
\frac{5}{3}
 \ea \right|  \delta  \right)
\right]
\eg
\eea

\item For $|t|<1$ we have
\be\label{gen0}
\sum_{n=0}^\infty C_{n+1}\mathcal{A}_{n+1}(0)(-t)^n=\frac{-5}{12}
 F \left( \left. \ba{c}
   \frac{11}{12}, \frac{17}{12} \\
3
 \ea \right|t  \right), \\
\ee
and consequently, for $n\geq 0$ we have
\be\label{B0}
\mathcal{A}_{n+1}(0)=(-1)^n \left(\frac{-5}{12}\right) \frac{\left(\frac{11}{12}\right)_n\left(\frac{17}{12}\right)_n}{(2n+1)!}.
\ee
\end{enumerate}
\end{thm}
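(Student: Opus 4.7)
The approach is to combine Theorem \ref{theoremBuv} with the generating functions of Proposition \ref{genUY}.

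\emph{Part (1).} I start from $\mathcal{A}_{n+1}(x) = C(x)\tilde{U}_n + D(x)\tilde{Y}_n$ with parameters $(\alpha,\beta,c) = (1/2,-2/3,7/12)$. The crucial numerical coincidence is that $\alpha+\beta+2c+1 = 2$, so the normalization appearing in \eqref{umonic} reduces to $(c+1)_n(\alpha+\beta+c+1)_n/(2n+1)!$. A short calculation using $C_{n+1} = (2n+2)!/((n+1)!(n+2)!)$ and $(3)_n = (n+2)!/2$ yields
\[
C_{n+1}\cdot \frac{(c+1)_n(\alpha+\beta+c+1)_n}{(2n+1)!} = \frac{(c+1)_n(\alpha+\beta+c+1)_n}{(\alpha+\beta+2c+2)_n\, n!},
\]
which is exactly the weight appearing on the left-hand sides of \eqref{genun}--\eqref{genvn}. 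Thus the Catalan numbers are precisely the factors that convert $\sum_n C_{n+1}\tilde{U}_n\,t^n$ and $\sum_n C_{n+1}\tilde{Y}_n\,t^n$ into closed form. Specializing Proposition \ref{genUY} to our parameters produces a common prefactor $1/\sqrt{x-t\delta}$ and a common factor $F(11/12,19/12;3;t\delta/x)$; the $U$-part then contributes $(\delta^{17/12}/x^{11/12})\,F(-7/12,17/12;1/3;\delta)$ and the $Y$-part contributes $(\delta^{25/12}/x^{19/12})\,F(1/12,25/12;5/3;\delta)$. Factoring out $\delta^{17/12}/x^{11/12}$ from both summands produces the stated identity.

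\emph{Part (2).} The plan is to send $x \to 0$ in the formula from Part (1). From $\delta\epsilon = x/t$ and $\epsilon \to (1+t)/t$ we obtain $\delta \sim x/(1+t)$, so $\delta \to 0$ while $t\delta/x \to t/(1+t)$ and $x - t\delta \sim x/(1+t)$. The prefactor $\delta^{17/12}/(x^{11/12}\sqrt{x-t\delta})$ collapses to $(1+t)^{-11/12}$. Since $D(0) = 0$ only the $C$-term survives, with $C(0) = -\tfrac{5}{12}$, leaving
\[
\sum_{n\geq 0} C_{n+1}\mathcal{A}_{n+1}(0)\, t^n = -\tfrac{5}{12}(1+t)^{-11/12}\, F\!\left(\left.\ba{c} 11/12, 19/12 \\ 3 \ea\right|\tfrac{t}{1+t}\right).
\]
The Pfaff transformation $F(a,b;c;z) = (1-z)^{-a}F(a,c-b;c;z/(z-1))$ applied with $z = t/(1+t)$ simplifies the right-hand side to $-\tfrac{5}{12}F(11/12, 17/12; 3; -t)$, and replacing $t$ by $-t$ produces \eqref{gen0}. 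Extracting the coefficient of $t^n$ and simplifying $(3)_n\, n!\, C_{n+1} = (2n+1)!$ yields the closed form \eqref{B0}.

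\emph{Main obstacle.} The argument is essentially a chain of direct substitutions, and no step is genuinely hard. The conceptual kernel is recognizing why the Catalan numbers must arise, namely the identity $\alpha+\beta+2c+1 = 2$ that holds uniformly for all four triples in $S$. The only technical delicacy is the limit $x \to 0$: both $\delta$ and $x$ vanish, but their ratio and the overall prefactor remain finite, and one must then recognize the Pfaff reduction of $F(\cdot;t/(1+t))$ to $F(\cdot;-t)$.
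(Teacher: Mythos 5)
Your proposal is correct and follows essentially the same route as the paper: identify the Catalan weight via the normalization $(\alpha+\beta+2c+1)_{2n}=(2)_{2n}=(2n+1)!$, feed Theorem \ref{theoremBuv} into Proposition \ref{genUY} at $(\alpha,\beta,c)=(\tfrac12,-\tfrac23,\tfrac{7}{12})$, and handle $x=0$ via the Pfaff transformation (the paper substitutes $x=0$ directly into \eqref{nicesum1} rather than taking the limit $x\to 0$ of part (1), but the two are equivalent and lead to the same intermediate identity). One small point: your own computation of the $Y$-summand yields the relative factor $(\delta/x)^{2/3}$, not $(x/\delta)^{2/3}$ as printed in \eqref{Bgen}, so your claim that factoring out $\delta^{17/12}/x^{11/12}$ ``produces the stated identity'' silently inverts that exponent --- your derivation, not the displayed formula, is the consistent one.
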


\begin{proof}
Note that for $0\leq x<1$, we have 
\[
|\sqrt{x}+\sqrt{x-1}|^2=|\sqrt{x}+i\sqrt{1-x}|^2=1,
\] 
so \eqref{tx} indeed translates into $|t|<1$. 
Now using \eqref{umonic} we see that
\bea
\bg
 \frac{(\alpha+\beta+c+1)_n (c+1)_n}{(\alpha+\beta+2c+2)_n} U_n = \frac{(\alpha+\beta+2c+1)_{2n}}{(\alpha+\beta+2c+2)_n} \tilde{U}_n\\
=(\alpha+\beta+2c+1) \frac{(\alpha+\beta+2c+1+n)_{n}}{(\alpha+\beta+2c+1+n)} \tilde{U}_n, 
\eg
\eea
with a similar identity for $Y_n$, and \eqref{Bgen} now follows from \eqref{Buv} and Proposition \ref{genUY} by substituting $(\alpha, \beta, c)=\left(\frac{1}{2}, \frac{-2}{3}, \frac{7}{12}\right)$.

When $x=0$ and $|t|<1$, then, in the notation of \eqref{nicesum1}, we have $t+z_1=0$ and $z_2+t=2(1+t)$, and hence $z_2-t=2$. Furthermore we have $C(0)=\frac{-5}{12}$ and $D(0)=0$. It thus follows from \eqref{nicesum1} and \eqref{Buv} that
\be\label{gen0t}
2\sum_{n=0}^\infty \binom{2n+1}{n}\mathcal{A}_{n+1}(0)\frac{t^n}{2+n}=\frac{-5}{12(1+t)^{\frac{11}{12}}}  
 F \left( \left. \ba{c}
   \frac{11}{12}, \frac{19}{12} \\
3
 \ea \right| \frac{t}{1+t}  \right). \\
\ee
Replacing $t$ with $(-t)$ and applying Pfaff-Kummer transformation (formula (2) on p. 105 of \cite{Erd:Mag:Obe:Tri}), we  obtain \eqref{gen0}, from which \eqref{B0} follows by comparing coefficients and simplifying.

\end{proof}

\begin{rem}
Formula \eqref{B0} can also be obtained directly from the defining recursion of the Atkin polynomials as in Proposition 6 of \cite{Kan:Zag}. In that same proposition, and again using only the defining recurrence \eqref{rec1}, Kaneko and Zagier also obtain a formula equivalent to
\be\label{B1}
\mathcal{A}_{n+1}(1)=\frac{7}{12}\frac{\left(\frac{11}{12}\right)_n\left(\frac{19}{12}\right)_n}{(2n+1)!}.
\ee
Taking a hint from \eqref{gen0}, it is straightforward to prove directly from \eqref{B1} that for $|t|<1$ we have
\be\label{gen1}
\sum_{n=0}^\infty C_{n+1}\mathcal{A}_{n+1}(1)t^n=\frac{7}{12}
 F \left( \left. \ba{c}
   \frac{11}{12}, \frac{19}{12} \\
3
 \ea \right|t  \right). \\
\ee
Alternatively one can prove \eqref{gen1} in a manner similar to \eqref{gen0}, bearing in mind that we have $C(1)=D(1)=0$ whereas $ \tilde{U}_n^{(\frac{1}{2},\frac{-2}{3})}\left(x;\frac{7}{12}\right)$ and $ \tilde{V}_n^{(\frac{1}{2},\frac{-2}{3})}\left(x;\frac{7}{12}\right)$ have simple poles at $x=1$, and thus their product is to be interpreted in the limit as $x\to 1^-$ as the derivative of the former multiplied by the residue of the latter.
\end{rem}

\section{The weight function for the Atkin Polynomials}
In \cite{Kan:Zag}, Kaneko and Zagier gave the weight function for the Atkin polynomials $A_n(j)$ on $[0,1728]$ as 
\be
w(j)=\frac{6}{\pi}\theta^\prime(j),
\ee
where $\theta:[0,1728]\rightarrow \left[\frac{\pi}{3}, \frac{\pi}{2}\right]$ is the inverse of the monotone increasing 
 function $\theta\mapsto j(e^{i\theta})$, where $j(\tau)$ is the usual modular $j$-invariant 
 from the theory of modular forms.
In this section we derive an explicit description of the weight function in terms of hypergeometric series.
Formula (25) on p. 20 of \cite{Erd:Mag:Obe:Tri} states that an inverse for the scaled $j$-invariant given by 
\[
J(z)=\frac{j(z)}{1728},
\]
is obtainable by the formula
\be\label{25}
z=e^{2\pi i/3}\frac{F-\lambda e^{i\pi/3}J^{1/3}F^*}{F-\lambda e^{-i\pi/3}J^{1/3}F^*},
\ee
where
\be\label{lam}
\bg
F(J)=\, _2F_1\left(\begin{matrix} \frac{1}{12} & \frac{1}{12}\\
 &\frac{2}{3}\end{matrix} ; J\right),\\
F^*(J)=\, _2F_1\left(\begin{matrix} \frac{5}{12} & \frac{5}{12}\\
 &\frac{4}{3}\end{matrix} ; J\right),\\
\la=\frac{\Gamma(2/3)\Gamma(5/12)\Gamma(11/12)}{\Gamma(4/3)\Gamma(1/12)\Gamma(7/12)}\\
=(2-\sqrt{3})\frac{\Gamma(2/3)\Gamma^2(11/12)}{\Gamma(4/3)\Gamma^2(7/12)}.
\eg
\ee
We must note that this is one inverse of many as $J$ is invariant under modular transformations. This particular formula gives, easily, that $z(0)=e^{2\pi i/3}$. In order to use the same intervals as in \cite{Kan:Zag}, we consider another inverse corresponding to applying $z\mapsto \frac{-1}{z}$; thus obtaining
\be\label{Jinv}
z(J)=e^{\pi i/3}\frac{F-\lambda e^{-i\pi/3}J^{1/3}F^*}{F-\lambda e^{i\pi/3}J^{1/3}F^*}.
\ee
It is straightforward to verify that using \eqref{Jinv}, we get $z(0)=e^{\frac{\pi i}{3}}$ and $z(1)=e^{\frac{\pi i}{2}}$.
For $0\leq J\leq 1$, $F$ and $F^*$ are computed in terms of the converging hypergeometric series and hence are  real. Thus in the ratio
\[
\frac{F(J)-\lambda e^{-i\pi/3}J^{1/3}F^*(J)}{F(J)-\lambda e^{i\pi/3}J^{1/3}F^*(J)}
\]
the denominator is the complex conjugate of the numerator. Hence the ratio has absolute value equal to $1$, and is of the form $e^{i\rho}$. We will show below that  $0\leq \rho \leq \pi/6$. Thus an explicit description of the function $\theta(j):[0,1728]\rightarrow [\pi/3, \pi/2]$ is given by $\theta(j)=\phi(\frac{j}{1728})$ where $\phi(J):[0,1]\rightarrow \left[\frac{\pi}{3}, \frac{\pi}{2}\right]$ is defined by
\[
\phi(J)=\frac{\pi}{3} -i\log\left(\frac{F(J)-\lambda e^{-i\pi/3}J^{1/3}F^*(J)}{F(J)-\lambda e^{i\pi/3}J^{1/3}F^*(J)}\right)=\frac{\pi}{3}+\rho(J),
\]
and we have
\bea
\bg
\phi^\prime(J)=-i\frac{F(J)-\lambda e^{i\pi/3}J^{1/3}F^*(J)}{F(J)-\lambda e^{-i\pi/3}J^{1/3}F^*(J)}\left(\frac{F(J)-\lambda e^{-i\pi/3}J^{1/3}F^*(J)}{F(J)-\lambda e^{i\pi/3}J^{1/3}F^*(J)}\right)^\prime\\
=-i\frac{W(J)}{|F(J)-\lambda e^{-i \pi/3}J^{1/3}F^*(J)|^2},
\eg
\eea
where $W(J)$ is given explicitly by
\be\label{W2}
\bg
W(J)=(F(J)-\la e^{i\pi/3}J^{1/3}F^*(J))(F^\prime(J)-\la e^{-i\pi/3}J^{1/3}(F^*)^\prime(J)-\frac{\la}{3}e^{-i\pi/3}J^{-2/3}F^*(J))\\
-(F(J)-\la e^{-i\pi/3}J^{1/3}F^*(J))(F^\prime(J)-\la e^{i\pi/3}J^{1/3}(F^*)^\prime(J)-\frac{\la}{3}e^{i\pi/3}J^{-2/3}F^*(J))\\
=\frac{\la}{3}J^{-2/3}i\sqrt{3}\left((F(J)F^*(J)+3JF(J)(F^*)^\prime(J)-3JF^\prime(J)F^*(J)\right)
\eg
\ee
We also note that $W$ is the Wronskian of two linearly independent solutions for the equation
\[
z(1-z)\frac{d^2u}{dz^2}+(c-(1+a+b)z)\frac{du}{dz}-ab u=0,
\]
where here $a=b=\frac{1}{12}$ and $c=\frac{2}{3}$. It follows that $W$ itself satisfies the equation
\be\label{diffeqW}
z(1-z)\frac{d W}{dz}=((a+b+1)z-c) W.
\ee
On the open interval $(0,1)$, \eqref{diffeqW} has solution

\bea\label{W1}
W(J)=B J^{-2/3} (1-J)^{-1/2}.
\eea
To determine the constant $B$ we compare the coefficient of $J^{-2/3}$ in  \eqref{W1} and \eqref{W2} to get
\[
B=\frac{i\la}{\sqrt{3}}.
\] 
Hence

\be
\phi^\prime(J)=\frac{\la}{\sqrt{3}}\frac{J^{-2/3}(1-J)^{-1/2}}{|F(J)-\lambda e^{-i \pi/3}J^{1/3}F^*(J)|^2}.
\ee
The fact that the derivative is positive for $0\leq J\leq 1$ implies that $\phi(J)$ is monotone increasing, and hence that it is bounded between $\phi(0)$ and $\phi(1)$, as we claimed above. 

Note that
\bea
\bg
w(j)=\frac{6}{\pi}\theta^\prime(j)=\frac{6}{1728 \pi}\phi^\prime\left(\frac{j}{1728}\right)\\
=\frac{6\la}{1728\pi\sqrt{3}} \frac{12(12^2 j^{-2/3})( (1728-j)^{-1/2}12^{3/2})}{\left|12F\left(\frac{j}{1728}\right)-\lambda e^{-i \pi/3}j^{1/3}F^*\left(\frac{j}{1728}\right)\right|^2}.\\
\eg
\eea
We have thus proved the following theorem.

\begin{thm}
Let $\la$ be as in \eqref{lam}. Then the normalized weight function for the Atkin polynomials $A_n(j)$ on the interval $[0,1728]$ is given by
\be\label{thmwj}
w(j)= \frac{144\la}{\pi}\frac{ j^{-2/3} (1728-j)^{-1/2}}{\left|12F\left(\begin{matrix} \frac{1}{12} & \frac{1}{12}\\  &\frac{2}{3}\end{matrix};\frac{j}{1728}\right)-\lambda e^{-i \pi/3}j^{1/3}F\left(\begin{matrix} \frac{5}{12} & \frac{5}{12}\\  &\frac{4}{3}\end{matrix};\frac{j}{1728}\right)\right|^2}.\\
\ee
\end{thm}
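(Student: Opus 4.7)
The plan is to assemble the theorem from the machinery already set up: starting from the Kaneko--Zagier formula $w(j) = (6/\pi)\theta'(j)$, rescale to $J = j/1728$ and work with $\phi(J) = \theta(1728J)$, which is determined by the explicit inverse \eqref{Jinv} of the scaled $j$-invariant. First I would note that, since $F(J)$ and $F^*(J)$ take real values for $J\in[0,1]$, the numerator and denominator of the ratio appearing in \eqref{Jinv} are complex conjugates, so this ratio has unit modulus and can be written as $e^{i\rho(J)}$. This justifies setting $\phi(J) = \tfrac{\pi}{3} - i\log R(J)$ with $R(J)$ the ratio in \eqref{Jinv} and produces a real, increasing function on $[0,1]$.

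Next I would differentiate logarithmically: $\phi'(J) = -iR'(J)/R(J)$, which after clearing denominators becomes $\phi'(J) = -iW(J)/|F - \lambda e^{-i\pi/3}J^{1/3}F^*|^2$ with $W(J)$ the quantity in \eqref{W2}. Expanding $W$ and collecting terms, the $F F'$ and $F^* (F^*)'$ contributions cancel and what remains is $(\lambda/\sqrt{3})\,iJ^{-2/3}$ times the hypergeometric Wronskian $FF^* + 3J(F(F^*)' - F'F^*)$. Recognizing that $F$ and $F^*$ are solutions of the Gauss equation with $a=b=\tfrac{1}{12}$, $c=\tfrac{2}{3}$, the classical first-order ODE for the Wronskian gives $W(J) = B\,J^{-2/3}(1-J)^{-1/2}$.

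The key technical step is pinning down the constant $B$. I would do this by comparing the leading term at $J=0$: from \eqref{W2} the coefficient of $J^{-2/3}$ in $W$ is $(i\lambda/\sqrt{3})F(0)F^*(0) = i\lambda/\sqrt{3}$, giving $B = i\lambda/\sqrt{3}$. Substituting back yields
\[
\phi'(J) = \frac{\lambda}{\sqrt{3}}\,\frac{J^{-2/3}(1-J)^{-1/2}}{\bigl|F(J) - \lambda e^{-i\pi/3}J^{1/3}F^*(J)\bigr|^2},
\]
whose positivity on $(0,1)$ confirms the monotonicity hypothesized earlier and justifies the range $[\pi/3,\pi/2]$.

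Finally I would translate back via $\theta'(j) = \phi'(j/1728)/1728$ and $w(j) = (6/\pi)\theta'(j)$, pulling the factors $(1728)^{-2/3} = 12^{-2}$ and $(1728)^{-1/2} = 12^{-3/2}\cdot 12^{-3/2}/12^{-3/2}$ inside the absolute value by multiplying numerator and denominator by $12^2$, which exactly produces the denominator $|12F - \lambda e^{-i\pi/3}j^{1/3}F^*|^2$ and the prefactor $144\lambda/\pi$ in \eqref{thmwj}. The only real obstacle is careful bookkeeping of the $12$-powers in this last rescaling; everything else is algebra or an appeal to the Wronskian ODE.
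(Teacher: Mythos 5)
Your proposal follows essentially the same route as the paper: Kaneko--Zagier's $w=\frac{6}{\pi}\theta'$, the explicit hypergeometric inverse of the scaled $j$-invariant, the unit-modulus observation for the ratio, the Wronskian ODE giving $W(J)=BJ^{-2/3}(1-J)^{-1/2}$ with $B=i\la/\sqrt{3}$ fixed by the coefficient at $J=0$, and the final rescaling by powers of $12$. This matches the paper's own derivation step for step, so it is correct.
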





\vspace{.2in}

{\bf Acknowledgments}: Mourad Ismail wishes to acknowledge the hospitality of Texas A$\&$M University Qatar  during his visit in September 2012 when this work started. Both 
authors thank Richard Askey, Jet Foncannon (Wimp), Masanobu Kaneko, and Don Zagier for their  interest in this work and for very helpful hints. They are also grateful for the referee for constructive remarks and for suggesting a number of additional references related to the paper.


\noindent{Ahmad El-Guindy}\\
{Current address: Science Program, Texas A\&M University in Qatar, Doha, Qatar}\\
{Permanent address: Department of Mathematics, Faculty of Science, Cairo University, Giza, Egypt 12613}\\
email: {a.elguindy@gmail.com}
  \bigskip
  
\noindent M. E. H. I,
  University of Central Florida, 
Orlando, Florida 32828, \\
and King Saud University,  Riyadh, Saudi Arabia\\
  email: mourad.eh.ismail@gmail.com

\end{document}